%
%
%
\documentclass[11pt]{amsart}
\usepackage{amssymb,amsmath}

\theoremstyle{plain}
\newtheorem{thrm}{Theorem}[section]
\newtheorem{lemma}[thrm]{Lemma}
\newtheorem{prop}[thrm]{Proposition}

\newtheorem{rmrk}[thrm]{Remark}
\newtheorem{dfn}[thrm]{Definition}

\numberwithin{equation}{section}

\setlength{\topmargin}{-0.2in}
\setlength{\oddsidemargin}{0.3in}
\setlength{\evensidemargin}{0.3in}
\setlength{\textwidth}{6.3in}
\setlength{\rightmargin}{0.7in}
\setlength{\leftmargin}{-0.5in}
\setlength{\textheight}{9.1in}

\begin{document}
\newcommand{\SL}{\mathcal L^{1,p}( D)}
\newcommand{\Lp}{L^p( Dega)}
\newcommand{\CO}{C^\infty_0( \Omega)}
\newcommand{\Rn}{\mathbb R^n}
\newcommand{\Rm}{\mathbb R^m}
\newcommand{\R}{\mathbb R}
\newcommand{\Om}{\Omega}
\newcommand{\Hn}{\mathbb H^n}
\newcommand{\aB}{\alpha B}
\newcommand{\eps}{\epsilon}
\newcommand{\BVX}{BV_X(\Omega)}
\newcommand{\p}{\partial}
\newcommand{\IO}{\int_\Omega}
\newcommand{\bG}{\boldsymbol{G}}
\newcommand{\bg}{\mathfrak g}
\newcommand{\Bux}{\mbox{Box}}
\newcommand{\al}{\alpha}
\newcommand{\til}{\tilde}


\title[Interior Cauchy-Schauder estimates for the heat flow, etc.]{Interior Cauchy-Schauder estimates for the heat flow in Carnot-Carath\'eodory spaces}

\dedicatory{Dedicated to Neil Trudinger, with friendship and deep
admiration, on the occasion of his 65th birthday}

\author{Donatella Danielli}
\address{Department of Mathematics\\ Purdue University \\ West Lafayette, IN 47907}
\email[Donatella Danielli]{danielli@math.purdue.edu}
\thanks{First author supported in part by NSF CAREER Grant, DMS-0239771}

\author{Nicola Garofalo}
\address{Department of Mathematics\\Purdue University \\
West Lafayette, IN 47907}
\email[Nicola Garofalo]{garofalo@math.purdue.edu}
\address{Dipartimento di Metodi e Modelli Matematici per le Scienze Applicate\\ Universit\`a di Padova\\ 35131 Padova, Italy}
\email[Nicola Garofalo]{garofalo@dmsa.unipd.it}
\thanks{Second author supported in part by NSF Grant DMS-0701001}

%

%
%
\keywords{}
\subjclass{}
\date{\today}

\maketitle


\baselineskip 16pt

\section{\textbf{Introduction.}}

\vskip 0.2in

The purpose of this paper is to establish some basic interior
estimates of Cauchy-Schauder type for the heat flow associated with
a system $X = \{X_1,...,X_m\}$ of $C^\infty$ vector fields in $\Rn$
satisfying H\"ormander's finite rank condition \cite{H}
\begin{equation}\label{frc}
rank\ Lie[X_1,...,X_m]\ \equiv\ n\ .
\end{equation}

Besides from playing an important role in the applications, such
estimates also have an obvious independent interest. Similarly to
the classical ones for elliptic equations, our results are tailored
on the intrinsic geometry of the system $X = \{X_1,...,X_m\}$. We
consider the \emph{sub-Laplacian} associated with $X$
\begin{equation}\label{SL}
\mathcal L\ =\ -\ \sum_{j=1}^m X_j^* X_j, ,
\end{equation}
and the corresponding \emph{heat operator} in $\mathbb R^{n+1}$
\begin{equation}\label{HO}
\mathcal H\ =\ \mathcal L\ -\ \frac{\partial}{\partial t}\ .
\end{equation}

Thanks to H\"ormander's theorem \cite{H}, the assumption \eqref{frc}
guarantees the hypoellipticity of both $\mathcal L$ and $\mathcal H$
on their respective domains. Therefore, distributional solutions of
either $\mathcal L u = f$, or $\mathcal H u = F$ are $C^\infty$,
wherever such are $f$ or $F$. Using the basic results in \cite{RS},
the existence of a fundamental solution for $\mathcal L$ and its
size estimates were established independently by Sanchez-Calle
\cite{SC} and by Nagel, Stein and Wainger \cite{NSW}. Corresponding
Gaussian estimates for the heat kernels were independently obtained
by Jerison and Sanchez-Calle \cite{JSC} and by Kusuoka and Stroock
\cite{KS1}, \cite{KS2}. One should also see \cite{VSC}.

To state our main result we define for $z = (x,t)\in \mathbb
R^{n+1}$ and $r>0$ the parabolic cylinders
\[
Q(z,r)\ =\ B(x,r)\ \times\ (t - r^2, t)\ ,\quad\quad\quad Q^-(z,r)\
=\ B(x,r)\ \times\ (t - r^2, t - \frac{r^2}{4})\ ,
\]
where we have indicated by $B(x,r) = \{y \in \Rn \mid d(x,y) < r\}$
the metric ball in the Carnot-Carath\'eodory (CC) distance
associated with $X$. Slightly abusing the notation we will write
$|B(x,r)|$ for the $n$-dimensional Lebesgue measure of the ball
$B(x,r)$ in $\Rn$, and also $|Q(z,r)|$ for the $(n+1)$-dimensional
Lebesgue measure of the cylinder $Q(z,r)$ in $\mathbb R^{n+1}$. We
note explicitly that $|Q(z,r)| = r^2\ |B(x,r)|$. Here is the main
result in this paper.

\medskip

\begin{thrm}\label{T:PS}
Let $D\subset \mathbb R^{n+1}$ be an open set and suppose that $u$
solves $\mathcal H u = 0$ in $D$. There exists $R_o>0$, depending on
$D$ and $X$, such that for every $z_o\in D$ and $0<r\leq R_o$ for
which $\overline{Q}(z_o,r)\subset D$, one has for any $s, k \in
\mathbb N \cup \{0\}$
\[
\underset{{Q}(z_o,r/2)}{\sup}\ \bigg|\frac{\partial^k}{\partial t^k}
X_{j_1}X_{j_2}...X_{j_s}\  u\bigg|\ \leq\ \frac{C}{r^{s + 2k}}\
\frac{1}{|Q(z_o,2r)|}\ \int_{Q(z_o,2r)}\ |u|\ d\xi\ d\tau\ ,
\]
for some constant $C=C(D,X,s,k)>0$. In the above estimate, for every
$i = 1,...,s,$ the index $j_i$ runs in the set $\{1,...,m\}$. When
$u\geq 0$ in $D$, then the above estimate can be replaced by
\[
\underset{{Q^-}(z_o,r/2)}{\sup}\ \bigg|\frac{\partial^k}{\partial t^k} X_{j_1}X_{j_2}...X_{j_s}\  u\bigg|\ \leq\ \frac{C}{r^{s + 2k}}\ u(z_o)\ .
\]
\end{thrm}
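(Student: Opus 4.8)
The plan is to reduce the whole statement to two pieces of information available at small scales: the Gaussian bounds of Jerison--Sanchez-Calle \cite{JSC} and Kusuoka--Stroock \cite{KS1},\cite{KS2} for the heat kernel of $\mathcal H$ \emph{together with all of its space-time derivatives}, and the doubling property of the CC balls from \cite{NSW}. Since the hypothesis keeps $z_o$ and the cylinders $\overline Q(z_o,2r)$ in a fixed compact subset of $D$ and forces $r\le R_o$, I would fix once and for all a local fundamental solution $p(x,t;y,\tau)$ for $\mathcal H$ on a slightly larger metric ball (for instance its Dirichlet heat kernel), for which, uniformly in $z_o$ and in $r\le R_o$,
\[
\big|X^{(x),I}X^{(y),J}\p_t^{k}p(x,t;y,\tau)\big|\ \le\ \frac{C_{I,J,k}}{(t-\tau)^{\frac{|I|+|J|}{2}+k}}\ \frac{1}{|B(x,\sqrt{t-\tau})|}\ \exp\Big(-\frac{d(x,y)^2}{C(t-\tau)}\Big),\qquad \tau<t,
\]
where $X^{(x),I}$ is an iterated horizontal derivative of order $|I|$ in the $x$-variables and similarly $X^{(y),J}$ in $y$. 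Granting this, I would prove the signed estimate by a localization/Duhamel argument, and then obtain the nonnegative estimate from the parabolic Harnack inequality.

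For the representation I would take a parabolic cutoff $\psi(x,t)=\eta(x)\chi(t)$ with $\eta$ smooth, $\eta\equiv1$ on $B(x_o,r)$, supported in $B(x_o,2r)$, $|X^I\eta|\le C r^{-|I|}$, and $\chi$ smooth, $\chi\equiv1$ for $t>t_o-r^2$, $\chi\equiv0$ for $t<t_o-4r^2$, $|\chi^{(k)}|\le C r^{-2k}$. Then $v:=\psi u$ is compactly supported in $x$ inside the ball where $p$ lives, vanishes for $t\le t_o-4r^2$, and solves $\mathcal H v=[\mathcal L,\psi]u-u\,\p_t\psi=:g$, with $g$ supported in the collar $\mathcal C:=Q(z_o,2r)\setminus Q(z_o,r)$ and equal to $\sum_j\big(-X_j^*((X_j\psi)u)+(X_j\psi)(X_ju)\big)-u\,\p_t\psi$. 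By uniqueness for the Cauchy problem (energy method) and the reproducing property of $p$, $v(x,t)=-\int\!\!\int p(x,t;y,\tau)\,g(y,\tau)\,dy\,d\tau$ for $t<t_o$; integrating by parts in $y$ to move the first-order operators in $g$ off $u$, I would rewrite this as $v(x,t)=\int\!\!\int K(x,t;y,\tau)\,u(y,\tau)\,dy\,d\tau$, where $K$ is a finite sum of terms $a(y,\tau)\,X^{(y),J}p(x,t;y,\tau)$ with $|J|\le1$, each coefficient $a$ supported in $\mathcal C$ and satisfying $|a|\le C r^{-2}$ (and $|a|\le Cr^{-1}$ when $|J|=1$). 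On $Q(z_o,r/2)$ one has $u=v$, so I can differentiate under the integral: for $(x,t)\in Q(z_o,r/2)$, $\p_t^kX_{j_1}\cdots X_{j_s}u(x,t)$ equals the integral of $\big(\p_t^kX^{(x)}_{j_1}\cdots X^{(x)}_{j_s}K\big)(x,t;y,\tau)\,u(y,\tau)$, which only involves $X^{(x),I}X^{(y),J}\p_t^kp$ with $|I|\le s$ and $|J|\le1$, against an $L^\infty$ coefficient supported in $\mathcal C$.

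The heart of the matter is then to estimate these kernel derivatives on the collar. The geometric observation is that if $(x,t)\in Q(z_o,r/2)$ and $(y,\tau)\in\mathcal C$, then either $d(x,y)\ge r/2$ (at points where $\eta$ is non-constant) or $t-\tau\ge\tfrac34 r^2$ (at times where $\chi$ is non-constant), while always $t-\tau<4r^2$. In the second alternative $\sqrt{t-\tau}\simeq r$ and the Gaussian factor is harmless, so the bound is immediate; in the first alternative I would write $\rho=\sqrt{t-\tau}$, use the doubling inequality $|B(x,r)|\le C(r/\rho)^{\nu}|B(x,\rho)|$ (with $\nu$ the local homogeneous dimension) and $|B(x,r)|\simeq|B(x_o,r)|$, and absorb $\exp(-d(x,y)^2/C(t-\tau))\le\exp(-cr^2/\rho^2)$ using that $\lambda\mapsto\lambda^N e^{-c\lambda^2}$ is bounded on $[\tfrac12,\infty)$. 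Either way one arrives at $|X^{(x),I}X^{(y),J}\p_t^kp(x,t;y,\tau)|\le C\,r^{-(|I|+|J|+2k)}|B(x_o,r)|^{-1}$ on $\mathcal C$. Since $|I|\le s$, $|J|\le1$, the coefficients in $K$ are $\le Cr^{-2}$, $\int_{\mathcal C}|u|\le\int_{Q(z_o,2r)}|u|$, and $r^2|B(x_o,r)|\simeq|Q(z_o,2r)|$ by doubling, multiplying and integrating yields
\[
\Big|\p_t^kX_{j_1}\cdots X_{j_s}u(x,t)\Big|\ \le\ \frac{C}{r^{\,s+2k}}\ \frac{1}{|Q(z_o,2r)|}\int_{Q(z_o,2r)}|u|\,dy\,d\tau ,
\]
and taking the supremum over $(x,t)\in Q(z_o,r/2)$ gives the first assertion. (If one prefers to cite only one-variable derivative bounds for $p$, one can instead keep $Xu$ inside $g$ and control $\int_{\mathcal C}|Xu|$ by the Caccioppoli inequality $\int\eta^2|Xu|^2\le C\int(|X\eta|^2+|\p_t\eta|)u^2$ for $\mathcal H$-solutions, after first dominating $\sup_{\mathcal C}|u|$ by its average over a slightly larger cylinder via the Moser-type bound; this merely enlarges the final cylinder by a fixed factor.)

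For the nonnegative case I would invoke the parabolic Harnack inequality for nonnegative solutions of $\mathcal H u=0$, which also follows from the Gaussian bounds above and has the time-asymmetric form $\sup_{Q^-(z_o,r/2)}u\le C\,u(z_o)$ --- this asymmetry being exactly why $Q^-$ enters the statement. For $w\in Q^-(z_o,r/2)$ one checks $\overline Q(w,r/8)\subset Q(z_o,r)\subset D$, so applying the signed estimate just proved at $w$ with radius $r/8$ gives $|\p_t^kX_{j_1}\cdots X_{j_s}u(w)|\le C r^{-(s+2k)}|Q(w,r/4)|^{-1}\int_{Q(w,r/4)}u\le C r^{-(s+2k)}\sup_{Q(w,r/4)}u$, and since $Q(w,r/4)$ lies in the past of $z_o$ inside $Q(z_o,r)$, a short Harnack chain (together with the continuity of $u$ up to $t_o$) bounds the last supremum by $C\,u(z_o)$; taking the supremum over $w$ finishes the proof. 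The step I expect to be the real obstacle is not any single estimate but the bookkeeping that makes the localization legitimate: producing the local heat kernel with its \emph{entire} family of derivative Gaussian bounds with constants uniform over $z_o$ and $r\le R_o$, justifying the Duhamel identity for the cut-off solution, and disposing of the first-order term $Xu$ in $g$ without circularity (via the integration by parts above, or the Caccioppoli route).
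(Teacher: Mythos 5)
Your overall strategy is the one the paper actually follows: multiply $u$ by a space-time cutoff, identify $\zeta u$ with a Duhamel potential of the commutator term via a uniqueness theorem for the Cauchy problem, integrate by parts in the space variable to move the first-order operators off $u$, and then estimate the resulting kernel on the collar by splitting into the spatial part (where $d(x,\xi)\gtrsim r$ and the Gaussian factor absorbs the blow-up of $(t-\tau)^{-N}|B(x,\sqrt{t-\tau})|^{-1}$ via doubling) and the temporal part (where $t-\tau\simeq r^2$ and the bound is immediate). Two of the obstacles you flag at the end are in fact non-issues in the paper's implementation: no local Dirichlet kernel is needed, since the global Kusuoka--Stroock kernel already carries Gaussian upper bounds for all derivatives $\partial_t^kX^{(x),I}X^{(\xi),J}p$ with uniform constants, and the identification $v\equiv w$ is handled by a bespoke uniqueness theorem (the paper's Theorem \ref{T:uniqueness1}) under a growth constraint that the bounded, compactly supported $v$ and $w$ trivially satisfy. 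Your treatment of the nonnegative case by combining the signed estimate at scale $r/8$ with the time-asymmetric Harnack inequality is also the intended route.

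The one genuine gap is the cutoff. You posit $\eta\in C^\infty_o(B(x_o,2r))$ with $\eta\equiv 1$ on $B(x_o,r)$ and $|X^I\eta|\le Cr^{-|I|}$ for \emph{every} multi-index $I$, with $B(x_o,r)$ a Carnot--Carath\'eodory ball. This is precisely the point the paper singles out as the obstruction: only the first derivatives $X_jd(x_o,\cdot)$ of the CC distance are bounded in general, so a cutoff built on the CC balls does not come with controlled higher-order derivatives, and your entire kernel estimate needs $|X^l a|\le Cr^{-2-l}$ for the coefficients of $K$ up to order $l=s$. The paper circumvents this by replacing $B(x_o,r)$ with the $X$-ball $B_X(x_o,r)=\{\Gamma(x_o,\cdot)>1/E(x_o,r)\}$, a level set of the steady-state fundamental solution, for which the Nagel--Stein--Wainger/S\'anchez-Calle estimates \eqref{gradgamma} control \emph{all} horizontal derivatives of the defining function; composing with a one-dimensional profile gives Lemmas \ref{L:cutoff}--\ref{L:parcutoff}, and the equivalence $B(x_o,a^{-1}r)\subset B_X(x_o,r)\subset B(x_o,ar)$ together with doubling converts the resulting $Q_X$-estimate back into the stated one for CC cylinders. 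Your argument goes through verbatim once you make this substitution (or otherwise justify the existence of such a cutoff, e.g.\ by the subelliptic mollifiers of \cite{CDG1}, \cite{CDG3}), at the harmless cost of a structural dilation factor in the radii; as written, the assertion $|X^I\eta|\le Cr^{-|I|}$ for $|I|\ge 2$ is unsupported and is exactly where the subelliptic geometry enters.
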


Theorem \ref{T:PS} generalizes (and contains) the following
steady-state Cauchy-Schauder type estimates which were found in
\cite{CGN3}.

\begin{thrm}\label{T:Harmonic}
Let $\Om\subset \Rn$ be a bounded open set and suppose that
$\mathcal L u = 0$ in $\Om$. There exists $R_o>0$, depending on
$\Om$ and $X$, such that for every $x\in \Om$ and $0<r\leq R_o$ for
which $\overline{B}(x,r)\subset \Om$, one has for any $s\in \mathbb
N$
\[
|X_{j_1}X_{j_2}...X_{j_s}u(x)|\ \leq\ \frac{C}{r^s}\
\underset{\overline{B}(x,r)}{max}\ |u|,
\]
for some constant $C=C(\Om,X,s)>0$. In the above estimate, for every
$i = 1,...,s,$ the index $j_i$ runs in the set $\{1,...,m\}$ .
\end{thrm}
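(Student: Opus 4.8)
The plan is to \emph{deduce Theorem~\ref{T:Harmonic} from Theorem~\ref{T:PS}} by lifting the steady--state problem to the parabolic one. Given $u$ with $\mathcal Lu=0$ in $\Omega$, by H\"ormander's hypoellipticity (recalled above) we may take $u\in C^\infty(\Omega)$. Put $D=\Omega\times\mathbb R$, which is open in $\mathbb R^{n+1}$, and let $\tilde u(x,t)=u(x)$ for $(x,t)\in D$. Since $\partial_t\tilde u\equiv 0$,
\[
\mathcal H\tilde u\ =\ \mathcal Lu\ -\ \partial_t\tilde u\ =\ 0\quad\text{in }D.
\]
The Carnot--Carath\'eodory metric and all the quantities attached to it for $D$ (the doubling exponent, the radius $R_o$ of Theorem~\ref{T:PS}, the implied constants) are governed by the behaviour of $X$ near $\Omega$, the time direction being Euclidean and playing no role, so they depend only on $\Omega$ and $X$; define the $R_o$ of the present statement to be, say, twice the one that Theorem~\ref{T:PS} furnishes for this $D$.

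Now fix $x\in\Omega$ and $0<r\le R_o$ with $\overline B(x,r)\subset\Omega$, and apply Theorem~\ref{T:PS} to $\tilde u$ with centre $z_o=(x,0)$ and radius $r/2$; the halving is exactly what keeps the larger cylinder appearing on the right inside $D$, since $\overline Q(z_o,r)=\overline B(x,r)\times[-r^2,0]\subset D$. As $X_{j_1}\cdots X_{j_s}\tilde u$ is independent of $t$, its value at $x$ equals its value at any point of $Q(z_o,r/4)$ lying over $x$, so taking $k=0$ and the prescribed $s$,
\[
\big|X_{j_1}X_{j_2}\cdots X_{j_s}u(x)\big|\ \le\ \underset{Q(z_o,r/4)}{\sup}\ \big|X_{j_1}\cdots X_{j_s}\tilde u\big|\ \le\ \frac{C}{(r/2)^{s}}\ \frac{1}{|Q(z_o,r)|}\int_{Q(z_o,r)}|\tilde u|\,d\xi\,d\tau .
\]
Because $\tilde u$ does not depend on $\tau$ and $Q(z_o,r)=B(x,r)\times(-r^2,0)$, the average on the right equals $\tfrac1{|B(x,r)|}\int_{B(x,r)}|u|\,d\xi\le\max_{\overline B(x,r)}|u|$; absorbing the factor $2^{s}$ into the constant yields
\[
\big|X_{j_1}X_{j_2}\cdots X_{j_s}u(x)\big|\ \le\ \frac{C}{r^{s}}\ \underset{\overline B(x,r)}{\max}\ |u|,\qquad C=C(\Omega,X,s),
\]
as claimed.

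In this route there is essentially no obstacle left: the genuine work — the cut--off and representation argument run against the sharp size and derivative bounds for the heat kernel — is already contained in Theorem~\ref{T:PS}, and all that remains is the harmless geometric bookkeeping between the ball $B(x,r)$ on which $\mathcal Lu=0$ is assumed and the one on which $\max|u|$ is measured. For completeness I would add that Theorem~\ref{T:Harmonic} also admits an independent proof — in substance the one in \cite{CGN3} — obtained by running the same scheme as for Theorem~\ref{T:PS} but with the time--independent fundamental solution $\Gamma(\cdot,\cdot)$ of $\mathcal L$ in place of the heat kernel: after a cut--off one writes $u$ as an integral of $\mathcal L(\phi u)$ against $\Gamma(x,\cdot)$, integrates by parts in the single term in which a derivative lands on $u$, and estimates using the bounds for $\Gamma$ and its derivatives (Sanchez--Calle, Nagel--Stein--Wainger) together with the local doubling property; the crux there, exactly as in the parabolic case, is to squeeze precisely the power $r^{-s}$ and the correct $|B(x,r)|^{-1}$ normalisation out of those kernel estimates.
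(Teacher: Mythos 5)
Your proof is correct and follows precisely the route the paper itself indicates immediately after the statement of Theorem~\ref{T:Harmonic}: apply Theorem~\ref{T:PS} to the time-independent extension $\tilde u(x,t)=u(x)$, which satisfies $\mathcal H\tilde u=0$ in $D=\Omega\times\mathbb R$, and translate the parabolic mean-value bound into the steady-state one; the halving of the radius, the dependence of the constants only on $\Omega$ and $X$, and the disappearance of the time-average are handled correctly. You also correctly note as an aside that the original proof in \cite{CGN3} proceeds instead through the level sets of the steady-state fundamental solution $\Gamma$, which is exactly what the paper reports.
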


This result was proved in \cite{CGN3} with a different approach
which exploited the geometry of the level sets of the fundamental
solution of the relevant sub-Laplacian. That approach however does
not seem to work in the time-dependent setting of this paper since
one presently lacks some very delicate asymptotic estimates of the
relevant heat kernels. In the special setting of the Heisenberg
group $\Hn$ such estimates were obtained in \cite{GS} using a
refined asymptotic analysis of the Fourier integrals involved in
Gaveau's explicit fundamental solution for the heat equation, see
\cite{Ga}. But the explicit calculations in \cite{GS} are out of
question in the general setting of this paper. We have been able to
get around such lack of estimates by: (i) working with intrinsic
parabolic cylinders in which the base is not a Carnot-Carath\'eodory
ball, but rather a level set of the steady-state fundamental
solution. This allows us to construct some appropriate $C^\infty$
cut-off functions, see Lemma \ref{L:parcutoff}; (ii) mimicking the
basic idea in E. E. Levi's method of the parametrix.

To see that Theorem \ref{T:PS} contains Theorem \ref{T:Harmonic} it
suffices to apply the former result to the function $u(x,t) = u(x)$,
where $u(x)$ which solves $\mathcal L u = 0$ in $\Om\subset \Rn$. It
is worth emphasizing that, in contrast with the classical case, in
the subelliptic setting any derivative $X_{j_i} u$ of a solution to
$\mathcal H u = 0$ fails to be itself a solution of $\mathcal H u =
0$. Our proof of Theorem \ref{T:PS} is based on the Gaussian
estimates of the heat kernel found in \cite{JSC}, \cite{KS1},
\cite{KS2}. In particular, we will assume such estimates in the
version which was obtained in \cite{KS1}. With such basic tool in
hands, we will then use arguments which are reminiscent of the
classical ones originated with E. E. Levi's method of the
parametrix, see \cite{Fr}, or also \cite{E}. It is worth mentioning
at this point that, in the special setting of Carnot groups, our
arguments combined with the non-isotropic group dilations, provide a
simplified approach to several of the local and global results
available in the literature.

Similarly to its classical ancestor, Theorem \ref{T:PS} has many
basic consequences. For instance, one can use it to study the
regularity at the boundary for solutions to the equation $\mathcal H
u = 0$ both in cylindrical and non-cylindrical domains in $\mathbb
R^{n+1}$. Basic applications of the Cauchy-Schauder estimates in the
time-independent subelliptic Dirichlet can be found in the works
\cite{CGN1}, \cite{LU}, \cite{CGN2}, \cite{LU2}, \cite{CGN3}. In a
different direction, such estimates were used in the works
\cite{CDG1}, \cite{CDG2}, \cite{CDG3}. Theorem \ref{T:PS} can also
be used to deduce some interesting Cauchy-Liouville type properties
of solutions to parabolic equations such as \eqref{HO}.

\vskip 0.6in

\section{\textbf{Preliminaries}}\label{S:Prelim}

\vskip 0.2in

In $\Rn$, with $n\geq 3$, we consider a system  $X =
\{X_1,...,X_m\}$ of $C^\infty$ vector fields satisfying H\"ormander
finite rank condition. A piecewise $C^1$ curve $\gamma:[0,T]\to \Rn$
is called \emph{sub-unitary} if whenever $\gamma'(t)$ exists one has
for every $\xi\in\Rn$
\[
<\gamma'(t),\xi>^2\ \leq\ \sum_{j=1}^m <X_j(\gamma(t)),\xi>^2 .
\]

We note explicitly that the above inequality forces $\gamma '(t)$ to
belong to the span of $\{X_1(\gamma (t)),...,$ $ X_m(\gamma (t))\}$.
The sub-unit length of $\gamma$ is by definition $l_s(\gamma)=T$.
Given $x, y\in \Rn$, denote by $\mathcal S_\Om(x,y)$ the collection
of all sub-unitary $\gamma:[0,T]\to \Om$ which join $x$ to $y$. The
Chow-Rashevsky accessibility theorem  states that, given a connected
open set $\Om\subset \Rn$, for every $x,y\in \Om$ there exists
$\gamma \in \mathcal S_\Om(x,y)$, see \cite{Chow}, \cite{Ra}. As a
consequence, if we pose
\[
d_{\Om}(x,y)\ =\ \text{inf}\ \{l_s(\gamma)\mid \gamma \in \mathcal S_\Om(x,y)\} ,
\]
we obtain a distance on $\Om$, called the \emph{Carnot-Carath\'eodory distance on $\Om$}, associated with the system $X$. When $\Om = \Rn$, we write $d(x,y)$ instead of $d_{\Rn}(x,y)$. It is clear that $d(x,y) \leq d_\Om(x,y)$, $x, y\in \Om$, for every connected open set $\Om \subset \Rn$. In \cite{NSW} it was proved that for every connected $\Om \subset \subset \Rn$ there exist $C, \epsilon >0$ such that
\begin{equation}\label{CCeucl}
C\ |x - y|\ \leq d_\Om(x,y)\ \leq C^{-1}\ |x - y|^\epsilon ,
\quad\quad\quad x, y \in \Om ,
\end{equation}
see also \cite{Be}. This gives $d(x,y)\ \leq C^{-1} |x -
y|^\epsilon$, $x, y\in \Om$, and therefore
\[
i: (\Rn, |\cdot|)\to (\Rn, d) \quad\quad\quad is\,\ continuous .
\]

It is easy to see that also the continuity of the opposite inclusion holds \cite{GN1}, hence the metric and the Euclidean topology are compatible.

For $x\in \Rn$ and $r>0$, we let $B_d(x,r)\ =\ \{y\in \Rn\mid d(x,y) < r \}$.
The basic properties of these balls were established by Nagel, Stein and Wainger in their seminal paper \cite{NSW}. Denote by $Y_1,...,Y_l$ the collection of the $X_j$'s and of those commutators which are needed to generate $\Rn$. A formal ``degree" is assigned to each $Y_i$, namely the corresponding order of the commutator. If $I = (i_1,...,i_n), 1\leq i_j\leq l$ is a $n$-tuple of integers, following \cite{NSW} we let $d(I) = \sum_{j=1}^n deg(Y_{i_j})$, and $a_I(x) = \text{det}\ (Y_{i_1},...,Y_{i_n})$. The \emph{Nagel-Stein-Wainger polynomial} is defined by
\begin{equation}\label{pol}
\Lambda(x,r)\ =\ \sum_I\ |a_I(x)|\ r^{d(I)}, \quad\quad\quad\quad r > 0.
\end{equation}

For a given bounded open set $U\subset \Rn$, we let
\begin{equation}\label{Q}
Q\ =\ \text{sup}\ \{d(I)\mid \ |a_I(x)| \ne 0, x\in U\},\quad\quad Q(x)\ =\ \text{inf}\ \{d(I)\mid |a_I(x)| \ne 0\} ,
\end{equation}
and notice that $n\leq Q(x)\leq Q$. The numbers $Q$ and $Q(x)$ are respectively called the \emph{local homogeneous dimension} of $U$ and the homogeneous dimension at $x$ with respect to the system $X$.

\medskip
\begin{thrm}[\textbf{\cite{NSW}}]\label{T:db}
For every bounded set $U\subset\Rn$, there exist
constants $C, R_o>0$ such that, for any $x\in U$, and $0 < r \leq R_o$,
\begin{equation}\label{nsw2}
C\ \Lambda(x,r)\ \leq\ |B_d(x,r)|\ \leq\  C^{-1}\ \Lambda(x,r).
\end{equation}
As a consequence, one has with $C_1 = 2^Q$
\begin{equation}\label{dc}
|B_d(x,2r)|\ \leq\ C_1\ |B_d(x,r)| \qquad\text{for every}\quad x\in U\quad\text{and}\quad 0< r \leq R_o.
\end{equation}
\end{thrm}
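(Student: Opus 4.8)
The plan is to derive the two-sided volume bound \eqref{nsw2} from the \emph{ball-box theorem}, and then obtain the doubling property \eqref{dc} as an essentially algebraic corollary. Fix the bounded set $U$, enlarge it slightly to a bounded connected open set, and let all constants below depend only on this set and on $X$. For $x$ in (the enlargement of) $U$, for small $r>0$, and for each $n$-tuple $I=(i_1,\dots,i_n)$ with $1\le i_j\le l$, introduce the map
\[
\Phi_{x,I}(u)\ =\ \big(\exp(u_1 Y_{i_1})\circ\cdots\circ\exp(u_n Y_{i_n})\big)(x),\qquad u=(u_1,\dots,u_n),
\]
defined for $|u_j|$ small. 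Its Jacobian at $u=0$ equals $a_I(x)=\det(Y_{i_1},\dots,Y_{i_n})(x)$, and a compactness argument gives a fixed $\delta=\delta(U,X)>0$ such that on the rescaled box $\mathrm{Box}_I(r)=\{|u_j|<\delta\,r^{\deg(Y_{i_j})}\}$ the Jacobian of $\Phi_{x,I}$ stays within a fixed multiple of $a_I(x)$ and $\Phi_{x,I}$ is a diffeomorphism onto its image.

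For the lower bound in \eqref{nsw2}, I would choose, for the given pair $(x,r)$, an $n$-tuple $I$ nearly maximizing $|a_I(x)|\,r^{d(I)}$; since the number of such tuples is bounded in terms of $l$ and $n$ only, this near-maximal term is comparable to $\Lambda(x,r)$. The geometric input is that flowing along a commutator $Y_{i_j}$ of degree $k$ for time $t$ displaces a point by CC-distance at most $C t^{1/k}$; composing the $n$ flows shows $\Phi_{x,I}(\mathrm{Box}_I(r))\subset B_d(x,Cr)$. Together with the Jacobian bound and the change of variables formula this yields $|B_d(x,Cr)|\ge c\,|a_I(x)|\,r^{d(I)}\ge c\,\Lambda(x,r)$, and rescaling $r$ gives the left inequality in \eqref{nsw2} (using also that $\Lambda(x,\cdot)$ is, up to constants, insensitive to such a rescaling by the degree-$Q$ polynomial structure).

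The reverse inclusion — that $B_d(x,r)$ is \emph{contained} in $\Phi_{x,I}(\mathrm{Box}_I(Cr))$ for the maximizing $I$ — is the heart of the matter and the step I expect to be the main obstacle. One must show that a point reachable from $x$ by a sub-unit curve of length $<r$ can be written as $\exp(u_1 Y_{i_1})\circ\cdots\circ\exp(u_n Y_{i_n})(x)$ with $|u_j|\lesssim r^{\deg(Y_{i_j})}$; this is exactly where the maximality of $|a_I(x)|$ enters, guaranteeing that the chosen frame spans $T_x\Rn$ ``efficiently'' at scale $r$ so that no sub-unit direction can drive the curve out of the box. Following Nagel--Stein--Wainger, one proves this by a continuity/degree argument: the exponential map of the maximal frame is shown to be a bijection from a fixed box onto a neighborhood of $x$ that traps the whole CC-ball, with Gr\"onwall-type ODE estimates controlling the drift of sub-unit trajectories in the ``bad'' coordinate directions. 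Granting the two inclusions, change of variables gives $|B_d(x,r)|\le C^{-1}|a_I(x)|\,r^{d(I)}\le C^{-1}\Lambda(x,r)$, completing \eqref{nsw2}.

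Finally, \eqref{dc} follows immediately: every $n$-tuple $I$ with $a_I(x)\ne0$ and $x\in U$ has $d(I)\le Q$, so
\[
\Lambda(x,2r)\ =\ \sum_I |a_I(x)|\,2^{d(I)}\,r^{d(I)}\ \le\ 2^{Q}\,\Lambda(x,r),
\]
and chaining this with \eqref{nsw2} at the scales $r$ and $2r$ gives $|B_d(x,2r)|\le C_1\,|B_d(x,r)|$ with $C_1$ controlled by $2^{Q}$ and the comparison constant of \eqref{nsw2}.
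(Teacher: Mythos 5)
This theorem is cited in the paper as a known result of Nagel--Stein--Wainger \cite{NSW} and is not proved there, so there is no ``paper's own proof'' to compare against; you are being asked, in effect, to sketch a self-contained argument for a deep theorem from the literature. With that understood, your outline is faithful to the actual NSW strategy: introduce the exponential maps $\Phi_{x,I}$, prove the ball-box sandwich for a (near-)maximizing tuple $I$, and read off the volume comparison from the Jacobian and a change of variables. The easy inclusion $\Phi_{x,I}(\mathrm{Box}_I(r))\subset B_d(x,Cr)$, via the observation that flowing along a degree-$k$ commutator for time $t$ moves a point a CC-distance $\lesssim t^{1/k}$, is correctly argued. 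You also correctly note that $\Lambda(x,\cdot)$ is multiplicatively stable under rescaling of $r$ because all exponents $d(I)$ lie in $[n,Q]$.

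The genuine gap is exactly the one you flag: the reverse inclusion $B_d(x,r)\subset\Phi_{x,I}(\mathrm{Box}_I(Cr))$ for the maximizing tuple $I$. Your paragraph sketches the idea (degree/continuity argument plus Gr\"onwall estimates to show sub-unit trajectories cannot escape the box in the high-degree directions, using maximality of $|a_I(x)|$), but this is precisely the technical heart of \cite{NSW} and is not an afterthought; it requires writing an arbitrary horizontal vector field as a combination of the chosen frame with coefficients controlled by ratios $a_J/a_I$, and then an ODE comparison to see that the resulting coordinates stay inside a scaled box. As a blind reconstruction your outline identifies all the right ingredients, but a reader should not be left with the impression that this step is a routine compactness argument; it is where all the work lives. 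Finally, on the doubling constant: chaining \eqref{nsw2} at scales $r$ and $2r$ through $\Lambda(x,2r)\le 2^{Q}\Lambda(x,r)$ yields $|B_d(x,2r)|\le C^{-2}2^{Q}|B_d(x,r)|$, so the constant is $C^{-2}2^{Q}$ rather than literally $2^{Q}$; your parenthetical remark gets this right, and the statement $C_1=2^{Q}$ in the paper should be read as shorthand for a constant of this form (note also that \eqref{dc2} as printed needs $C_1^{-1}$ in place of $C_1$ to be consistent at $t=1$).
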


\medskip

The numbers $C_1, R_o$ in \eqref{dc} will be referred to as the
\emph{characteristic local parameters} of $U$. We notice that
\eqref{dc} implies for every $x\in U$, $0<r\leq R_o$ and $0\leq
t\leq 1$
\begin{equation}\label{dc2}
|B(x,tr)|\ \geq\ C_1 t^Q |B(x,r)|\ . \end{equation}

Because of (2.2), if we let
\begin{equation}\label{E}
E(x,r)\ =\ \frac{\Lambda(x,r)}{r^{2}} ,
\end{equation}
then the function $r \to E(x,r)$ is strictly increasing. We denote by $F(x,\cdot)$ the  inverse function of $E(x,\cdot)$, so that $F(x,E(x,r)) = r$. Let $\Gamma(x,y)=\Gamma(y,x)$ be the positive fundamental solution of the sub-Laplacian
\[
\mathcal L\ =\ \sum_{j=1}^m X_j^*X_j ,
\]
and consider its level sets
\[
\Om(x,r)\ =\ \left\{y\in \Rn \mid \Gamma(x,y) > \frac{1}{r}\right\} .
\]

The following definition plays a key role in this paper.

\medskip

\begin{dfn}\label{D:lballs}
For every $x\in \Rn$, and $r>0$, the set
\[
B_X(x,r)\ =\ \left\{y\in \Rn \mid \Gamma(x,y) >
\frac{1}{E(x,r)}\right\}
\]
will be called the $X$-\emph{ball}, centered at $x$ with radius $r$.
\end{dfn}

\medskip

We note explicitly that
\[
B_X(x,r)\ =\ \Om(x,E(x,r)),\quad \quad \text{and that} \quad \quad
\Om(x,r)\ =\ B_X(x,F(x,r)).
\]

One of the main geometric properties of the $X$-balls, is that they
are equivalent to the Carnot-Carath\'eodory balls. To see this, we
recall the following  important result, established independently in
\cite{NSW}, \cite{SC}, see also \cite{FSC}. Hereafter, the notation
$Xu=(X_1u,...,X_mu)$ indicates the sub-gradient of a function $u$,
whereas $|Xu|=(\sum_{j=1}^m(X_ju)^2)^\frac{1}{2}$ will denote its
length.

\medskip

\begin{thrm}\label{T:NSW}
 Given a bounded set $U\subset \Rn$, there exists $R_o$, depending on $U$ and on $X$, such that for $x\in U,\ 0<d(x,y)\leq R_o$, one has for $s\in \mathbb{N}\cup\{0\}$, and for some constant $C=C(U, X, s) >0$
\begin{align}\label{gradgamma}
& |X_{j_1}X_{j_2}...X_{j_s}\Gamma(x,y)|\ \leq\ C^{-1}\ \frac{d(x,y)^{2-s}}{|B_d(x,d(x,y))|},
\\
& \Gamma(x,y)\ \geq \ C\ \frac{d(x,y)^2}{|B_d(x,d(x,y))|} .
\notag
\end{align}
In the first inequality in \eqref{gradgamma}, one has $j_i\in \{1,...,m\}$ for $i=1,...,s$, and $X_{j_i}$ is allowed to act on either $x$ or $y$.
\end{thrm}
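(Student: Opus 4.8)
The plan is to deduce both estimates from the Gaussian bounds for the heat kernel $p(x,y,t)$ associated with $\mathcal H$ — those of \cite{JSC}, \cite{KS1}, \cite{KS2} recalled in the Introduction — together with the volume estimates of Theorem \ref{T:db}. After shrinking $R_o$ we may assume $R_o\le 1$, and throughout we write $d = d(x,y)$. The point of departure is the representation of the (positive) fundamental solution as the time integral of the heat kernel,
\[
\Gamma(x,y)\ =\ \int_0^\infty p(x,y,t)\ dt\ ,
\]
so that it suffices to integrate in $t$ the pointwise bounds for $p$ and its horizontal derivatives. Concretely, for $0<t\le 1$ and $d\le R_o$ one has the lower bound $p(x,y,t)\ \ge\ c^{-1}\,|B_d(x,\sqrt t)|^{-1}\,e^{-Md^2/t}$, and for any string of horizontal derivatives — each $X_{j_i}$ allowed to act in the $x$ or in the $y$ variable, as in \cite{JSC}, \cite{KS1} — the upper bound
\[
\big|X_{j_1}\cdots X_{j_s}\, p(x,y,t)\big|\ \leq\ \frac{C}{t^{s/2}\,|B_d(x,\sqrt t)|}\ e^{-d^2/(Mt)}\ .
\]

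\textbf{Upper bound.} By hypoellipticity we may differentiate under the integral sign and split $\int_0^\infty = \int_0^{d^2}+\int_{d^2}^\infty$. On $(0,d^2]$ one has $\sqrt t\le d\le R_o$, so \eqref{dc2} gives $|B_d(x,\sqrt t)|^{-1}\le C\,(d/\sqrt t)^Q\,|B_d(x,d)|^{-1}$; inserting this and substituting $t=d^2\sigma$ produces
\[
\int_0^{d^2}\big|X_{j_1}\cdots X_{j_s} p(x,y,t)\big|\,dt\ \leq\ \frac{C\,d^{2-s}}{|B_d(x,d)|}\int_0^1\sigma^{-(Q+s)/2}\,e^{-1/(M\sigma)}\,d\sigma\ \leq\ \frac{C'\,d^{2-s}}{|B_d(x,d)|}\ ,
\]
the last integral being finite because the exponential factor beats every power of $\sigma$ as $\sigma\to 0^+$. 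For the tail we replace $p$ by the Dirichlet heat kernel $p^V$ of a fixed bounded open set $V$ with $U\subset\subset V$ containing all the points $y$ in play: the difference $\Gamma(x,\cdot)-\int_0^\infty p^V(x,\cdot,t)\,dt$ is $\mathcal L$-harmonic in $V$, hence $C^\infty$, so it and all its $X$-derivatives are bounded on $U$; moreover $p^V$ has a positive bottom of the spectrum, whence $|X_{j_1}\cdots X_{j_s}\,p^V(x,y,t)|\le C\,e^{-\lambda t}$ for $t\ge 1$ (by interior parabolic regularity applied to the exponentially decaying $p^V$). Thus the tail contributes only $O(1)$, which is absorbed: since $n\ge 3$, $d\le R_o\le 1$, and $|B_d(x,d)|\le C\,d^n$ (by Theorem \ref{T:db} and the fact that every $d(I)\ge n$, so $\Lambda(x,d)\le C\,d^n$ for $d\le 1$), we have $d^{2-s}/|B_d(x,d)|\ \ge\ c\,d^{2-s-n}\ \ge\ c\,R_o^{2-s-n}\ >\ 0$. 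This proves the first line of \eqref{gradgamma}.

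\textbf{Lower bound.} Here $s=0$; discard all of the time integral except $t\in[d^2/4,\,d^2]$. On this range $e^{-Md^2/t}\ge e^{-4M}$ and, by monotonicity of $r\mapsto|B_d(x,r)|$, $|B_d(x,\sqrt t)|\le|B_d(x,d)|$, so the Gaussian lower bound gives
\[
\Gamma(x,y)\ \geq\ \int_{d^2/4}^{d^2}p(x,y,t)\,dt\ \geq\ \frac{c^{-1}e^{-4M}}{|B_d(x,d)|}\cdot\frac{3}{4}\,d^2\ =\ C\,\frac{d^2}{|B_d(x,d)|}\ ,
\]
which is the second line of \eqref{gradgamma}.

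\textbf{The main obstacle.} The genuinely delicate point is the behaviour of $p(x,y,t)$ for large $t$: globally $\int_0^\infty p\,dt$ converges only under a suitable growth hypothesis on the volume at infinity (cf.\ \cite{VSC}), and even then its tail must be controlled uniformly in $x\in U$. Passing to the Dirichlet heat kernel of a fixed bounded $V\supset\supset U$ — legitimate precisely because the difference with $\Gamma$ is $\mathcal L$-harmonic, hence smooth, on $U$ — disposes of this issue, at the cost of letting the constant $C$ depend on $V$, i.e.\ on $U$ and $X$, exactly as asserted. (Historically Theorem \ref{T:NSW} was obtained differently in \cite{NSW} and \cite{SC}: one lifts $X$ to a free system à la Rothschild--Stein \cite{RS}, approximates the lifted sub-Laplacian by a homogeneous one on a Carnot group, and transfers explicit parametrix estimates back downstairs; that route is self-contained but considerably more laborious.)
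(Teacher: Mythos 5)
The paper does not prove Theorem \ref{T:NSW}: it is quoted as a known result, established independently in \cite{NSW} and \cite{SC} (see also \cite{FSC}), and those authors argue via Rothschild--Stein lifting to a free nilpotent group and explicit parametrix estimates there. Your proof therefore cannot ``match'' the paper's, since the paper has none; what it does is supply an alternative derivation from the parabolic Gaussian bounds of Theorem \ref{T:KS}, and you correctly flag this difference yourself in your closing paragraph. The route you take -- integrating $p(x,y,t)$ and its $X$-derivatives in $t$, using \eqref{dc2} to replace $|B_d(x,\sqrt t)|^{-1}$ by $(d/\sqrt t)^Q|B_d(x,d)|^{-1}$ on the range $t\le d^2$, scaling $t=d^2\sigma$, and absorbing the tail -- is a standard and perfectly sound way to pass from heat-kernel estimates to steady-state ones, and the computations you display are correct, including the lower bound obtained by restricting to $t\in[d^2/4,d^2]$ and the observation that $d^{2-s}/|B_d(x,d)|\gtrsim d^{2-s-n}$ stays bounded away from $0$ for $d\le R_o$ because $n\ge 3$. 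What each approach buys: the lifting/parametrix route of \cite{NSW},\cite{SC} is self-contained and does not presuppose the heat kernel at all (indeed it historically preceded \cite{JSC},\cite{KS1}), while your route is shorter and makes the link between elliptic and parabolic scaling transparent, at the cost of taking Theorem \ref{T:KS} as input.

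Two points in the tail estimate deserve more care than you give them. First, the representation $\Gamma(x,y)=\int_0^\infty p(x,y,t)\,dt$ is only formal in this generality: the integral need not converge at $t=\infty$, and the existence of a global positive fundamental solution is itself a nontrivial matter. Your fix -- replace $p$ by the Dirichlet kernel $p^V$ of a fixed $V\supset\supset U$ and observe that $\Gamma(x,\cdot)-\int_0^\infty p^V(x,\cdot,t)\,dt$ is $\mathcal L$-harmonic, hence smooth, hence bounded together with its $X$-derivatives on the compact set $\bar U\times\bar U$ -- is correct, but it would be cleaner (and would sidestep any discussion of spectral gaps and exponential decay of $p^V$) to simply truncate: set $v(x,y)=\int_0^{T}p(x,y,t)\,dt$ for a fixed $T>R_o^2$. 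Then $\mathcal L_y v(x,\cdot)=-\delta_x+p(x,\cdot,T)$, so $\Gamma(x,\cdot)-v(x,\cdot)$ solves $\mathcal L w=-p(x,\cdot,T)$ with $C^\infty$ right-hand side, hence is $C^\infty$ near the diagonal by hypoellipticity and uniformly bounded on $U$ with all its $X$-derivatives; your short-time computation on $\int_0^{d^2}$ together with the trivial bound $\int_{d^2}^{T}|X^s p|\,dt\le C(T)$ (no exponential decay required) then finishes the argument. Second, the claim $|X_{j_1}\cdots X_{j_s}p^V(x,y,t)|\le Ce^{-\lambda t}$ for $t\ge 1$, which you invoke in passing, requires a supplementary argument (spectral gap in $L^2$ plus parabolic hypoelliptic regularity to upgrade to pointwise derivative bounds); it is true but is not an immediate corollary of anything stated in the paper, and the truncation above makes it unnecessary.
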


\medskip

In view of \eqref{dc}, \eqref{gradgamma}, it is now easy to recognize that, given a bounded set $U\subset \Rn$, there exists $a>1$, depending on $U$ and $X$, such that
\begin{equation}\label{equiv}
B(x,a^{-1}r)\ \subset\  B_X(x,r)\ \subset\ B(x,ar),
\end{equation}
for $x\in U, 0<r\leq R_o$. We observe that, as a consequence of \eqref{nsw2}, and of \eqref{gradgamma}, one has
\begin{equation}\label{F}
C\ d(x,y)\ \leq\  F\left(x,\frac{1}{\Gamma(x,y)}\right)\ \leq\  C^{-1}\ d(x,y),
\end{equation}
for all $x\in U, 0<d(x,y)\leq R_o$.

We observe that for a Carnot group $\bG$ of step $k$, if
$\bg=V_1\oplus ...\oplus V_k$ is a stratification of the Lie algebra
of $\bG$, then one has $\Lambda(x,r)=const\ r^Q$, for every $x\in
\bG$ and every $r>0$, with $Q=\sum_{j=1}^k\;j\;dimV_j$, the
homogeneous dimension of the group $\bG$. In this case  $Q(x) \equiv
Q$, see \cite{F}, \cite{BLU}, \cite{G}.

It is important to keep in mind the following basic properties of a
Carnot-Carath\'eodory space.

\medskip

\begin{prop}\label{P:compact}
$(\Rn,d)$ is locally compact.  Furthermore, for any
bounded set $U\subset\Rn$ there exists $R_o=R_o(U)>0$ such that the closed
balls $\bar B(x_o,R)$, with $x_o\in U$ and $0<R<R_o$, are compact.
\end{prop}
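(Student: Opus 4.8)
The plan is to reduce everything to the Euclidean case, exploiting that the Carnot--Carath\'eodory topology on $\Rn$ coincides with the Euclidean one --- a fact already recorded above, since by \eqref{CCeucl} the identity map $(\Rn,|\cdot|)\to(\Rn,d)$ is continuous, and so is its inverse \cite{GN1}. Because local compactness is a purely topological property and $(\Rn,|\cdot|)$ is locally compact, the first assertion follows at once; moreover, a subset of $\Rn$ is $d$-compact if and only if it is Euclidean-compact, and a $d$-closed set is Euclidean-closed.

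For the second assertion I would fix the bounded set $U$, pick a bounded connected open $\Om$ with $\overline U\subset\Om$, and set $\delta=\operatorname{dist}(\overline U,\p\Om)>0$ and $M=\sup\{|X_j(x)|:x\in\overline\Om,\ 1\le j\le m\}<\infty$. The claim would be that $R_o=\delta/(2\sqrt m\,M)$ works. To see this, take $x_o\in U$, $0<R<R_o$, $y\in\overline B(x_o,R)$ (so $d(x_o,y)\le R<R_o$), and a sub-unitary curve $\gamma:[0,T]\to\Rn$ from $x_o$ to $y$ with $T=l_s(\gamma)<R_o$. The heart of the argument is to show that $\gamma$ cannot leave $\overline\Om$: if $t^*\in(0,T]$ were the first time $\gamma(t^*)\in\p\Om$, then $\gamma(t)\in\overline\Om$ on $[0,t^*]$, and testing the sub-unit inequality with $\xi=\gamma'(t)$ gives $|\gamma'(t)|^4\le\sum_{j=1}^m\langle X_j(\gamma(t)),\gamma'(t)\rangle^2\le mM^2|\gamma'(t)|^2$, hence $|\gamma'(t)|\le\sqrt m\,M$ there; but then the Euclidean length of $\gamma|_{[0,t^*]}$ would be at most $\sqrt m\,M\,T<\sqrt m\,M\,R_o=\delta/2<\delta$, contradicting that this arc joins $\overline U$ to $\p\Om$. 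Hence $\gamma$ stays in $\overline\Om$, the bound $|\gamma'(t)|\le\sqrt m\,M$ holds throughout, and integrating yields $|x_o-y|\le\sqrt m\,M\,T<\delta/2$.

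It then follows that $\overline B(x_o,R)$ is contained in the closed Euclidean ball of radius $\delta/2$ about $x_o$; being $d$-closed it is Euclidean-closed, hence Euclidean-compact by Heine--Borel, and therefore $d$-compact. I expect the confinement step to be the only delicate point: one has to choose $R_o$ small enough, in terms of $\delta$ and $M$, that near-minimizing sub-unit curves issuing from $U$ cannot escape the fixed compact set $\overline\Om$, which is exactly what $R_o<\delta/(2\sqrt m\,M)$ guarantees. (Alternatively, once confinement is established one could simply invoke the lower bound in \eqref{CCeucl}, since then $d(x_o,y)=d_\Om(x_o,y)\ge C|x_o-y|$.)
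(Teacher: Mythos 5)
Your proof is correct. Note that the paper states Proposition~\ref{P:compact} without a proof (the remark that follows it merely points the reader to \cite{GN1}, \cite{GN2} for related facts), so there is no argument in the text against which to compare yours; what you have written is a clean, self-contained justification.

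The structure is sound. For local compactness you use that the Carnot--Carath\'eodory and Euclidean topologies on $\Rn$ coincide, which the paper records right after \eqref{CCeucl}; since local compactness, compactness, and closedness are topological notions, they transfer freely between the two. For the quantitative part, the key step is the confinement estimate: testing the sub-unit inequality with $\xi=\gamma'(t)$ gives $|\gamma'(t)|\le\sqrt m\,M$ whenever $\gamma(t)\in\overline\Om$, and the choice $R_o=\delta/(2\sqrt m\,M)$ then makes it impossible for a sub-unit curve of length $<R_o$ starting in $\overline U$ to reach $\partial\Om$, since that would require Euclidean length at least $\delta$. Once the curve is trapped in $\Om$, the same speed bound yields $|x_o-y|<\delta/2$, so $\overline B(x_o,R)$ sits inside a fixed Euclidean ball; being $d$-closed (hence Euclidean-closed) and bounded, it is compact by Heine--Borel. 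Your parenthetical alternative --- observing that confinement forces $d(x_o,y)=d_\Om(x_o,y)$ and then invoking the lower bound in \eqref{CCeucl} --- is also valid, and it is the route that connects most directly with the estimates already quoted in the paper. The only point worth making explicit, if you write this up, is that ``first time $\gamma(t^*)\in\partial\Om$'' exists by continuity and that $\gamma$ is only piecewise $C^1$, so the speed bound holds almost everywhere and is integrated; but these are routine.
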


\medskip

\begin{rmrk}
Compactness of balls of large radii may fail in general, see
\cite{GN1}.  However, there are important cases in which
Proposition~\ref{P:compact} holds globally, in the sense that one
can take $U$ to coincide with the whole ambient space and
$R_o=\infty$. One example is that of Carnot groups. Another
interesting case is that when the vector fields $X_j$ have
coefficients which are globally Lipschitz, see \cite{GN1},
\cite{GN2}. Henceforth, for any given bounded set $U\subset\Rn$ we
will always assume that the local parameter $R_o$ has been chosen so
to accommodate Proposition \ref{P:compact}.
\end{rmrk}

\medskip

Keeping in mind the applicability of the results in this paper to
the study of the Dirichlet problem, the basic reference on this
subject for operators of H\"ormander type remains the pioneer study
of Bony \cite{B}. For more recent developments the reader should
consult \cite{CG}, \cite{CGN3} and the references therein.

The following basic result was established in \cite{KS1},
\cite{KS2}, see also \cite{JSC}.

\medskip

\begin{thrm}\label{T:KS}
The fundamental solution $p(x,t;\xi,\tau) = p(x,\xi;t-\tau)$ with singularity at $(\xi,\tau)$ satisfies the following size estimates : there exists $M = M(X)>0$ and for every
$k , s\in \mathbb{N}\cup\{0\}$, there exists a constant $C=C(X, k, s) >0$, such that
\begin{equation}\label{gaussian1}
\bigg|\frac{\partial^k}{\partial t^k} X_{j_1}X_{j_2}...X_{j_s}p(x,t;\xi,\tau)\bigg|\ \leq\ \frac{C}{(t - \tau)^{s + 2k}}\ \frac{1}{|B(x,\sqrt{t - \tau})|}\ \exp\ \bigg( -\ \frac{M d(x,\xi)^2}{t - \tau}\bigg)\ ,
\end{equation}
\begin{equation}\label{gaussian2}
p(x,t;\xi,\tau)\ \geq \ \frac{C^{-1}}{|B(x,\sqrt{t - \tau})|}\ \exp\ \bigg( -\ \frac{M^{-1} d(x,\xi)^2}{t - \tau}\bigg)\ ,
\end{equation}
for every $x,\xi \in \Rn$, and any $-\infty < \tau < t < \infty $.
\end{thrm}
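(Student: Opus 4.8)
The plan is to obtain \eqref{gaussian1}--\eqref{gaussian2} in two stages: first the case $s=k=0$, i.e.\ the two-sided Gaussian bound for $p$ itself, and then the derivative bounds, by feeding the $s=k=0$ estimate into rescaled interior a priori estimates for $\mathcal H$.

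\emph{Stage 1: the bounds for $p$.} The operator $\mathcal L$ is of sum-of-squares type, nonnegative and self-adjoint, and generates the symmetric submarkovian heat semigroup $e^{t\mathcal L}$ solving $\mathcal H u=0$; in particular the kernel $p$ is symmetric in its spatial slots. By Theorem \ref{T:db} the space $(\Rn,d,dx)$ is locally volume-doubling, and it supports the scale-invariant $L^2$-Poincar\'e inequality $\int_{B(x,r)}|v-v_{B(x,r)}|^2\,dy\le C\,r^2\int_{B(x,2r)}|Xv|^2\,dy$ on CC-balls (for H\"ormander systems this is due to Jerison, and also follows from \cite{NSW,SC,RS}). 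By the now-standard equivalence (Grigor'yan, Saloff-Coste, Sturm), doubling together with the Poincar\'e inequality is the same as the scale-invariant parabolic Harnack inequality on the cylinders $Q(z,r)$, which in turn is equivalent to the two-sided bound
\[
\frac{C^{-1}}{|B(x,\sqrt{t-\tau})|}\,e^{-M^{-1}d(x,\xi)^2/(t-\tau)}\ \le\ p(x,t;\xi,\tau)\ \le\ \frac{C}{|B(x,\sqrt{t-\tau})|}\,e^{-Md(x,\xi)^2/(t-\tau)}.
\]
Concretely: the on-diagonal estimate $\|p(\cdot,t;\cdot,\tau)\|_{L^\infty}\le C/|B(\cdot,\sqrt{t-\tau})|$ is a Nash/Faber-Krahn consequence of doubling plus Poincar\'e; the off-diagonal Gaussian factor is produced by Davies's exponential-perturbation trick applied to $e^{\psi}\mathcal L e^{-\psi}$ with $\psi$ Lipschitz with respect to $d$, using the eikonal property $|Xd(x,\cdot)|\le1$ a.e.\ of the CC-distance; and the matching lower bound follows by chaining Moser's parabolic Harnack inequality along a sub-unit geodesic from $x$ to $\xi$ broken into $\sim d(x,\xi)^2/(t-\tau)$ pieces. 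This gives \eqref{gaussian2} and \eqref{gaussian1} for $s=k=0$ (and is exactly the content of \cite{KS1,KS2,JSC}; see also \cite{VSC}).

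\emph{Stage 2: the derivative bounds.} Set $\rho=\sqrt{t-\tau}$. The local tool is the rescaled interior estimate: for each $s,k$ there is $C=C(X,s,k)$ such that whenever $w$ solves $\mathcal H w=0$ in a cylinder and $\overline{Q}(z_0,\rho)$ lies inside it with $\rho\le R_o$, then
\[
\underset{Q(z_0,\rho/2)}{\sup}\ \bigg|\frac{\partial^k}{\partial t^k}X_{j_1}\cdots X_{j_s}w\bigg|\ \le\ \frac{C}{\rho^{s+2k}}\ \bigg(\frac{1}{|Q(z_0,\rho)|}\int_{Q(z_0,\rho)}|w|^2\,d\xi\,d\tau\bigg)^{1/2}.
\]
This follows from the Rothschild-Stein lifting theorem \cite{RS}: after lifting the $X_j$ to vector fields free up to the required step and invoking the attendant non-isotropic dilations, $\mathcal H$ becomes exactly $\rho^{-2}$-homogeneous and the subelliptic a priori estimates hold on balls with the correct scaling; together with the Sobolev embedding of the non-isotropic $W^{j}_X$-spaces into $L^\infty$ for $j$ large, a bootstrap over a nested family of shrinking cylinders, and the parabolic Caccioppoli inequality (to return from Sobolev norms of $w$ to its $L^2$-average), one gets the display above, uniformly over $\overline{Q}\subset D$ by Proposition \ref{P:compact} and the local parameters of Theorem \ref{T:db}. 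Fixing $x,\xi,t,\tau$, apply this to $w(\xi',t')=p(x,t';\xi',\tau)$ on $Q((\xi,t),\rho/2)$: there $t'-\tau\ge\tfrac34(t-\tau)>0$, so $w$ is smooth and, by the spatial symmetry of $p$, solves $\mathcal H w=0$ in the variables $(\xi',t')$; and by Stage 1 (together with volume doubling, which costs only a polynomial-in-$d(x,\xi)/\rho$ factor absorbed into a slightly smaller exponential constant) one has $\sup_{Q((\xi,t),\rho/2)}|w|\le C|B(x,\sqrt{t-\tau})|^{-1}e^{-M'd(x,\xi)^2/(t-\tau)}$. Feeding this into the interior estimate, whose radius is comparable to $\sqrt{t-\tau}$, yields \eqref{gaussian1} with all derivatives acting on the $\xi$-variables; acting on $x$ instead follows from the symmetry $p(x,t;\xi,\tau)=p(\xi,t;x,\tau)$ plus doubling, and a general mixture of $x$- and $\xi$-derivatives from the semigroup identity $p(x,\xi;t-\tau)=\int_{\Rn}p(x,z;\tfrac{t-\tau}{2})\,p(z,\xi;\tfrac{t-\tau}{2})\,dz$, applied to each factor.

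\emph{Main obstacle.} The heavy lifting is Stage 1: in the Carnot-Carath\'eodory generality the two-sided Gaussian bound for $p$ itself is a genuine theorem --- precisely what \cite{KS1,KS2,JSC} establish --- resting either on the full Moser-Nash machinery transplanted to doubling-plus-Poincar\'e spaces (the eikonal property of $d$ carrying the geometry in the Davies step) or on Kusuoka-Stroock's Malliavin-calculus analysis of the diffusion generated by $\mathcal L$. A secondary delicate point, once Stage 1 is in hand, is that the exponent constant $M$ in \eqref{gaussian1} must be independent of the orders $s,k$: this is arranged by proving the off-diagonal decay once and for all for $p$, so that Stage 2 contributes only polynomial, $M$-free factors that are harmlessly absorbed into the exponential.
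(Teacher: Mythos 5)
The paper does not actually prove Theorem \ref{T:KS}: it is imported from the literature (the sentence introducing it reads ``The following basic result was established in \cite{KS1}, \cite{KS2}, see also \cite{JSC}'') and then used as the black-box input for Theorem \ref{T:PS}. So there is no internal argument to compare yours against; what you have written is a reconstruction of the external proof, and as such it is a faithful outline of the standard route: symmetry, local doubling and Jerison's Poincar\'e inequality give the two-sided Gaussian bound for $p$ via the Harnack/heat-kernel equivalence, and the derivative bounds follow by applying scaled interior estimates to $p(x,\cdot\,;\cdot-\tau)$ on cylinders of radius comparable to $\sqrt{t-\tau}$, with symmetry and the Chapman--Kolmogorov identity handling the $x$-derivatives and mixed derivatives. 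Three caveats are worth recording. First, your Stage 1 rests on doubling and Poincar\'e, which for general H\"ormander systems are only local (Theorem \ref{T:db} holds for $x$ in a bounded set and $r\le R_o$), so your argument delivers the estimates only for $x,\xi$ in a compact set and $t-\tau$ small; the global-in-time claim of the theorem as literally stated needs the long-time analysis of \cite{KS2}, though the paper only ever uses the local regime. Second, your scaling yields the prefactor $(t-\tau)^{-(k+s/2)}$ rather than the $(t-\tau)^{-(s+2k)}$ displayed in \eqref{gaussian1}; yours is the dimensionally correct form and the one the paper itself uses later in \eqref{PS15}, so the exponent in the theorem's display is a typo and your version is the right one. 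Third, your Stage 2 inverts the paper's architecture: the scaled interior estimate you feed the on-diagonal bound into is essentially a local form of Theorem \ref{T:PS}, which the paper \emph{deduces from} Theorem \ref{T:KS}. This is not circular provided the interior estimate is established independently via Rothschild--Stein lifting, subelliptic a priori estimates and Sobolev embedding, as you indicate, but it means your route proves the Schauder-type estimate first and reads the Gaussian derivative bounds off from it, the reverse of the paper's logic. Both stages lean on substantial external theorems, so the proposal is an outline of the known proof rather than a self-contained one, which you candidly acknowledge.
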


\vskip 0.6in


\section{\textbf{Proof of Theorem \ref{T:PS}}}\label{S:proof}

\vskip 0.2in

In the sequel we fix a point $z_o = (x_o,t_o) \in \mathbb R^{n+1}$ and consider the following parabolic cylinders
\begin{equation}\label{Q}
Q(z_o,r)\ =\ B(x_o,r)\ \times\ (t_o - r^2, t_o)\ ,
\end{equation}
\begin{equation}\label{Qminus}
Q^-(z_o,r)\ =\ B(x_o,r)\ \times\ (t_o - r^2, t_o - \frac{r^2}{4}) .
\end{equation}

We will also need cylinders based on the $X$-balls
\begin{equation}\label{XQ}
Q_X(z_o,r)\ =\ B_X(x_o,r)\ \times\ (t_o - r^2, t_o)\ ,
\end{equation}
\begin{equation}\label{XQminus}
Q^-_X(z_o,r)\ =\ B_X(x_o,r)\ \times\ (t_o - r^2, t_o - \frac{r^2}{4}) .
\end{equation}

The following basic lemma, which constitutes a generalization of a
result obtained in \cite{CGL} for the case $s = 1$, is the main
motivation for introducing the cylinders $Q_X(z_o,r)$. If we work
with the $CC$ balls $B(x_o,r)$ then the existence of a smooth
cut-off function fails since in general only the first derivatives
of the $CC$ distance with respect to the vector fields $X_1,...,X_m$
are bounded, see e.g. \cite{GN2}.

\medskip

\begin{lemma}\label{L:cutoff}
For every $x_o\in \Rn$ and $r>0$ there exists a function $\chi \in C^\infty_o(B_X(x_o,2r))$, $0 \leq \chi \leq 1$, such that $\chi \equiv 1$ on $B_X(x_o,r)$, and moreover for each $s\in \mathbb N \cup \{0\}$ there exists $C = C(X,s) > 0$ such that
\[
|X_{j_1}X_{j_2}...X_{j_s} \chi|\ \leq\ \frac{C}{r^s}\ .
\]
\end{lemma}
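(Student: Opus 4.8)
The plan is to exploit the fact that, by Definition~\ref{D:lballs}, the $X$-ball $B_X(x_o,r)$ is a super-level set of the single function $\Gamma(x_o,\cdot)$. Unlike the Carnot-Carath\'eodory gauge $y\mapsto d(x_o,y)$, this function is of class $C^\infty$ away from $x_o$ (by hypoellipticity of $\mathcal L$) and, crucially, it obeys the quantitative derivative bounds \eqref{gradgamma}. Accordingly I would fix once and for all a profile $\psi_o\in C^\infty(\R)$ with $0\le\psi_o\le 1$, $\psi_o\equiv 0$ on $(-\infty,0]$ and $\psi_o\equiv 1$ on $[1,\infty)$ --- so that $\psi_o^{(p)}$ is supported in $[0,1]$ for every $p\ge 1$ --- and set
\[
A\ =\ \frac{1}{E(x_o,3r/2)}\ ,\qquad B\ =\ \frac{1}{E(x_o,r)}\ ,\qquad \chi(y)\ =\ \psi_o\!\left(\frac{\Gamma(x_o,y)-A}{B-A}\right)\ .
\]
Since $E(x_o,\cdot)$ is strictly increasing, $A<B$ and $\chi$ is well defined.

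The qualitative properties of $\chi$ are then routine. The lower bound in \eqref{gradgamma}, combined with Theorem~\ref{T:db}, gives $\Gamma(x_o,y)\gtrsim d(x_o,y)^{2-Q(x_o)}\to\infty$ as $y\to x_o$ (recall $Q(x_o)\ge n\ge 3$), so $\chi\equiv 1$ near $x_o$ and hence $\chi\in C^\infty(\Rn)$. If $y\in B_X(x_o,r)$ then $\Gamma(x_o,y)>1/E(x_o,r)=B$, so $\chi(y)=1$; and if $\chi(y)\ne 0$ then $\Gamma(x_o,y)>A=1/E(x_o,3r/2)$, so $\{\chi\ne 0\}\subset B_X(x_o,3r/2)$, whence by continuity of $\Gamma(x_o,\cdot)$ and $1/E(x_o,2r)<1/E(x_o,3r/2)$ we get $\operatorname{supp}\chi\subset\overline{B_X(x_o,3r/2)}\subset B_X(x_o,2r)$; this closed set is compact by Proposition~\ref{P:compact} and \eqref{equiv} (for $r$ below the characteristic parameter of a fixed bounded neighborhood of $x_o$). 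Thus $\chi\in C^\infty_o(B_X(x_o,2r))$ and $\chi\equiv 1$ on $B_X(x_o,r)$.

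For the derivative estimates (the case $s=0$ being trivial) I would use the chain rule: since $X_{j_1},\dots,X_{j_s}$ are first-order operators and $A,B$ are constants, $X_{j_1}\cdots X_{j_s}\chi$ is a sum of at most $N(s)$ terms of the form
\[
(B-A)^{-p}\ \psi_o^{(p)}\!\left(\tfrac{\Gamma(x_o,\cdot)-A}{B-A}\right)\ \prod_{l=1}^{p}\big(X_{(l)}\Gamma(x_o,\cdot)\big)\ ,
\]
with $1\le p\le s$, each $X_{(l)}$ a composition of $q_l\ge 1$ of the $X_{j_i}$'s, and $q_1+\dots+q_p=s$. On the support of such a term one has $\Gamma(x_o,y)\in[A,B]$, i.e.\ (since $F(x_o,\cdot)$ is the increasing inverse of $E(x_o,\cdot)$) $F\big(x_o,1/\Gamma(x_o,y)\big)\in[r,3r/2]$, and therefore $d(x_o,y)\asymp r$ by \eqref{F}. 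Consequently \eqref{gradgamma}, the doubling property \eqref{dc} and Theorem~\ref{T:db} yield $|X_{(l)}\Gamma(x_o,y)|\le C\,r^{2-q_l}/|B_d(x_o,r)|\asymp r^{2-q_l}/\Lambda(x_o,r)$ for each $l$, while $|\psi_o^{(p)}|\le\|\psi_o^{(p)}\|_\infty$. Since $\sum_l(2-q_l)=2p-s$, each term is bounded by $C(X,s)^p\,(B-A)^{-p}\,r^{2p-s}\,\Lambda(x_o,r)^{-p}$.

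The crux is the lower bound for the width of the transition window,
\[
B-A\ =\ \frac{r^2}{\Lambda(x_o,r)}-\frac{(3r/2)^2}{\Lambda(x_o,3r/2)}\ \ge\ \frac13\,\frac{r^2}{\Lambda(x_o,r)}\ \asymp\ \frac{r^2}{|B_d(x_o,r)|}\ ,
\]
the last comparison being \eqref{nsw2}. This is precisely where the sub-elliptic structure enters: every monomial appearing with a non-zero coefficient in the Nagel-Stein-Wainger polynomial $\Lambda(x_o,\cdot)$ has degree at least $Q(x_o)\ge n\ge 3$, hence $\Lambda(x_o,3r/2)\ge(3/2)^{3}\Lambda(x_o,r)$, which forces $(3r/2)^2/\Lambda(x_o,3r/2)\le\tfrac23\,r^2/\Lambda(x_o,r)$. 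Feeding $B-A\gtrsim r^2/\Lambda(x_o,r)$ into the bound of the previous paragraph makes the powers of $\Lambda(x_o,r)$ cancel, leaving $C(X,s)^{p}\,r^{-s}$; summing the at most $N(s)$ terms gives $|X_{j_1}\cdots X_{j_s}\chi|\le C(X,s)\,r^{-s}$ (with constants inherited from Theorems~\ref{T:NSW} and~\ref{T:db}). The only delicate point is this width estimate --- that the values of $\Gamma(x_o,\cdot)$ separating $B_X(x_o,r)$ from the complement of $B_X(x_o,3r/2)$ span an interval of length comparable to $r^2/|B_d(x_o,r)|$, and not shorter --- and, as shown, it reduces to the fact that $\Lambda(x_o,\cdot)$ contains no monomial of degree $\le 2$.
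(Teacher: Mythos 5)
Your proof is correct, and it is a complete version of an argument the paper only sketches; let me note where the two approaches differ operationally. The paper sets $\chi = f(\rho_{x_o})$ where $\rho_{x_o}$ is a \emph{regularized distance} (in effect $\rho_{x_o}(x)=F(x_o,1/\Gamma(x_o,x))$, so that $\{\rho_{x_o}<r\}=B_X(x_o,r)$), takes a profile $f$ that turns off on $[r,2r]$ with $|f^{(s)}|\lesssim r^{-s}$, and cites the derivative estimate $|X_{j_1}\cdots X_{j_s}\rho_{x_o}|\lesssim \rho_{x_o}^{\,1-s}$; Fa\`a di Bruno then gives the claim, with "details left to the reader." You instead compose a fixed profile with an \emph{affine rescaling of $\Gamma(x_o,\cdot)$ itself}, which sidesteps the need to differentiate the implicitly defined inverse $F(x_o,\cdot)$. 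The price is that you must show the values of $\Gamma(x_o,\cdot)$ across the transition shell span a window of width $\gtrsim r^2/|B_d(x_o,r)|$, and you supply exactly the right observation: since every monomial of $\Lambda(x_o,\cdot)$ has degree at least $Q(x_o)\ge n\ge 3$, one gets $\Lambda(x_o,3r/2)\ge(3/2)^{n}\Lambda(x_o,r)$ and hence $B-A\ge(1-(3/2)^{2-n})\,r^2/\Lambda(x_o,r)$. That lower bound is the same quantitative fact that underlies the paper's (unstated) proof of the regularized-distance derivative bound, so the two routes rest on the same inputs (Theorems \ref{T:db} and \ref{T:NSW}, \eqref{dc}, \eqref{F}); yours just makes the normalization explicit and self-contained. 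One small caveat applies equally to both: the NSW estimates \eqref{nsw2}, \eqref{gradgamma}, \eqref{F} are only valid for $r\le R_o$ (and $d(x_o,\cdot)\le R_o$), so the ``for every $r>0$'' in the lemma statement should be read as ``for $0<r\le R_o$'' --- which is how the lemma is actually invoked in the proof of Theorem \ref{T:PS}.
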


\begin{proof}[\textbf{Proof}]
Let $f\in C^\infty_o([0,\infty))$, $0 \leq f \leq 1$, $f\equiv 1$ on $[0,r]$, $supp\ f \subset [0,2r)$, and such that for every $s\in \mathbb N$ one has
\begin{equation}\label{stimef}
|f^{(s)}(\sigma)|\ \leq\ \frac{C(s)}{r^s}\quad\quad\quad\text{for every}\quad \sigma \in [r,2r]\ .
\end{equation}

We define $\chi(x) = f(\rho_{x_o}(x))$. This function clearly possesses the desired support properties. The estimates for the derivatives of $\chi$ along the vector fields $X_1,...,X_m$ now follow by recurrence from \eqref{stimef} and from the following estimates for the regularized distance $\rho_{x_o}$
\begin{equation}\label{Xregdis}
|X_{j_1}X_{j_2}...X_{j_s} \rho_{x_o}(x)|\ \leq\ \frac{C(s)}{\rho_{x_o}(x)^{s-1}}\ \quad\quad\quad x \not= x_o\ .
\end{equation}

We leave the details to the reader.

\end{proof}

\medskip

Using Lemma \ref{L:cutoff} we now obtain a similar ad hoc result on the parabolic cylinders $Q_X(z_o,r)$.

\medskip

\begin{lemma}\label{L:parcutoff}
For every $z_o = (x_o,t_o)\in \mathbb R^{n+1}$ and $r>0$ there exists a function
\[
\zeta\ \in\ C^\infty_o\left(B_X(x_o,2r)\ \times\ (t_o - 4 r^2, t_o]\right)\ ,
\]
 $0 \leq \zeta \leq 1$, such that $\zeta \equiv 1$ on $\overline Q_X(z_o,r)$, and moreover for each $s , k \in \mathbb N \cup \{0\}$ there exists $C = C(X,s,k) > 0$ such that
\begin{equation}\label{estzeta}
|\frac{\partial^k}{\partial t^k} X_{j_1}X_{j_2}...X_{j_s} \zeta|\ \leq\ \frac{C}{r^{s+2k}}\ .
\end{equation}
\end{lemma}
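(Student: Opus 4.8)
The plan is to build $\zeta$ as a product of a spatial cutoff, supplied by Lemma~\ref{L:cutoff}, and a one-dimensional cutoff in the time variable. First I would pick $\chi \in C^\infty_o(B_X(x_o,2r))$ with $0 \le \chi \le 1$, $\chi \equiv 1$ on $B_X(x_o,r)$, and $|X_{j_1}\cdots X_{j_s}\chi| \le C r^{-s}$, exactly as produced by Lemma~\ref{L:cutoff}. Next I would choose $g \in C^\infty(\mathbb R)$ with $0 \le g \le 1$, $g \equiv 1$ on $[t_o - r^2, \infty)$ — hence in particular on $[t_o - r^2, t_o]$ — and $\operatorname{supp} g \subset (t_o - 4r^2, \infty)$, i.e. $g \equiv 0$ on $(-\infty, t_o - 4r^2]$, together with the rescaled derivative bounds $|g^{(k)}(\tau)| \le C(k) r^{-2k}$ for all $k$; such a $g$ is obtained by translating and dilating a fixed model bump function, the $r^{-2k}$ arising because the time variable scales like the square of the space variable in the parabolic geometry. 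Then I set $\zeta(x,t) = \chi(x)\, g(t)$.

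The support statement is immediate: $\zeta$ is supported in $B_X(x_o,2r) \times (t_o - 4r^2, t_o]$ (the closed right endpoint $t_o$ is harmless since we only need $C^\infty$ in the interior of the relevant domain and one-sided smoothness at $t_o$, matching the convention in the statement), and $0 \le \zeta \le 1$ is clear from the corresponding bounds on $\chi$ and $g$. On $\overline{Q}_X(z_o,r) = \overline{B_X(x_o,r)} \times [t_o - r^2, t_o]$ we have $\chi \equiv 1$ and $g \equiv 1$, so $\zeta \equiv 1$ there. For the derivative estimate \eqref{estzeta}, note that the vector fields $X_1,\dots,X_m$ act only in the $x$ variable and $\partial/\partial t$ acts only in $t$, and these two sets of operators commute; hence
\[
\frac{\partial^k}{\partial t^k} X_{j_1}X_{j_2}\cdots X_{j_s}\,\zeta\ =\ \left(X_{j_1}\cdots X_{j_s}\chi\right)(x)\cdot g^{(k)}(t),
\]
and taking absolute values and using the two bounds gives $\le C(X,s) r^{-s} \cdot C(k) r^{-2k} = C(X,s,k) r^{-(s+2k)}$, which is \eqref{estzeta}.

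Since the vector fields are $x$-dependent in general, a product cutoff might a priori seem too naive, but the key point that makes it work is precisely that $X_{j_i}$ and $\partial_t$ commute as differential operators on $\mathbb R^{n+1}$, so no cross terms or commutator errors appear — this is the only place the tensor-product structure of the operator $\mathcal H = \mathcal L - \partial_t$ in the space and time variables is used. Consequently there is no real obstacle here: the entire difficulty has already been absorbed into Lemma~\ref{L:cutoff}, whose proof rests on the subtle estimates \eqref{Xregdis} for the regularized Carnot--Carath\'eodory distance. The remaining work is the elementary construction of the one-variable function $g$ with the parabolic-scaling derivative bounds, which I would leave to the reader in the same spirit as the end of the proof of Lemma~\ref{L:cutoff}.
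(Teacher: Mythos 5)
Your proposal is correct and matches the paper's proof essentially verbatim: both construct $\zeta(x,t)=\chi(x)h(t)$ as a tensor product of the spatial cutoff from Lemma~\ref{L:cutoff} and a one-variable parabolically scaled time cutoff, and both conclude via the factorization $\partial_t^k X^s\zeta = (X^s\chi)\,h^{(k)}$. The remarks on commutativity of $X_{j_i}$ with $\partial_t$ and on parabolic scaling, while not spelled out in the paper, are accurate and explain exactly why the naive product works.
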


\begin{proof}[\textbf{Proof}]
We choose a function $h\in C^\infty(\mathbb R)$, such that $0 \leq h \leq 1$, $h \equiv 1$ on $[t_o - r^2, \infty)$, $supp\ h \subset (t_o - 4 r^2, \infty)$, and for which one has for every $k\in \mathbb N$
\[
|h^{(k)}(t)|\ \leq\ \frac{C(k)}{r^{2k}}\ \quad\quad\quad\text{for every}\quad t \in [t_o - 4 r^2, t_o - r^2]\ .
\]

We define
\[
\zeta(z)\ =\ \zeta(x,t)\ =\ \chi(x)\ h(t)\ ,
\]
where $\chi$ is the function in Lemma \ref{L:cutoff}. From the definitions of $\chi$ and $h$ it is clear that $\zeta \equiv 1$ on $\overline Q_X(z_o,r)$. Since
\[
|\frac{\partial^k}{\partial t^k} X_{j_1}X_{j_2}...X_{j_s} \zeta(z)|\ =\ |X_{j_1}X_{j_2}...X_{j_s} \chi(x)|\ |h^{(k)}(t)|\ ,
\]
the estimate \eqref{estzeta} follows from the corresponding ones for $\chi$ and $h$.

\end{proof}

\medskip

We will need the following form of Duhamel's principle.

\medskip

\begin{prop}\label{P:Duhamel}
Let $F\in C^\infty_o(\mathbb R^{n} \times [0,\infty))$, and define
\begin{equation}\label{Duhamel}
w(z)\ =\ \int_0^t\ \int_{\Rn}\ p(x,\xi;t - \tau)\ F(\xi,\tau)\ d\xi\ d\tau\ ,
\end{equation}
 where $p(x,\xi;t - \tau)$ denotes the positive fundamental solution of $\mathcal H$ with singularity at $(\xi,\tau)$. One has $w\in C^\infty(\Rn \times (0,\infty))$, and moreover
\begin{equation}\label{IVP}
\mathcal H w\ =\ -\ F\ \quad\quad\quad\text{in}\quad \Rn \times (0,t_o)\ ,\quad\quad w(x,0)\ =\ 0 \ ,\quad x \in \Rn\ .
\end{equation}
\end{prop}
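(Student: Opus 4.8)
The plan is to establish the identity $\mathcal{H} w = -F$ first in the sense of distributions on $\Rn \times (0,\infty)$, and then to invoke the hypoellipticity of $\mathcal{H}$ --- guaranteed by the finite rank condition \eqref{frc} --- to upgrade $w$ to a genuine $C^\infty$ solution, the initial condition coming from a crude size bound. The only analytic input is the upper Gaussian estimate \eqref{gaussian1} (with $k=s=0$) together with the doubling property \eqref{dc}: by a standard annular decomposition of $\Rn$ these give a uniform bound $\int_{\Rn} p(x,\xi;\sigma)\, d\xi \leq C$ for $\sigma$ in any bounded interval and $x,\xi$ in bounded sets --- which is all that is needed, since $F$ is compactly supported in the space variable and $C^\infty$-smoothness is a local matter. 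This bound makes $w$ well defined, bounded, and in fact $|w(x,t)| \leq C t \, \|F\|_{L^\infty}$ for $t$ in a bounded range, so that $w$ extends continuously up to $t = 0$ with $w(x,0) = 0$; this disposes of the initial condition in \eqref{IVP}.

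For the distributional identity, fix $\phi \in C^\infty_o(\Rn \times (0,\infty))$ and write $\mathcal{H}^* = \mathcal{L} + \partial_t$, using that $\mathcal{L}$ is formally self-adjoint. Interchanging the order of integration --- legitimate by the uniform $L^1$ bound above, since $\operatorname{supp} \phi$ is compact --- one obtains
\[
\langle w, \mathcal{H}^* \phi \rangle\ =\ \int_0^\infty \int_{\Rn}\ F(\xi,\tau)\ \bigg[\int_\tau^\infty \int_{\Rn}\ p(x,\xi;t-\tau)\ (\mathcal{L}_x + \partial_t)\phi(x,t)\ dx\,dt\bigg]\ d\xi\, d\tau\ .
\]
The inner bracket is computed by a harmless $\epsilon$-regularization that keeps the integration away from the pole of the kernel: on $\{t \geq \tau + \epsilon\}$ one integrates by parts in $x$ (moving $\mathcal{L}_x$ onto $p$; no boundary term, as $\phi$ has compact support) and in $t$ (moving $\partial_t$ onto $p$; the boundary term at $t = \infty$ vanishes). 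The surviving volume integrand is $(\mathcal{L}_x - \partial_t)\,p(x,\xi;t-\tau)\,\phi = \big(\mathcal{H}_{(x,t)} p\big)\,\phi = 0$, since $p(\cdot,\xi;\cdot-\tau)$ solves $\mathcal{H} p = 0$ for $t > \tau$, and there remains only the boundary contribution $-\int_{\Rn} p(x,\xi;\epsilon)\,\phi(x,\tau+\epsilon)\, dx$. Letting $\epsilon \to 0^+$ and using that $p(\cdot,\xi;\epsilon)$ is an approximate identity (a defining property of the heat kernel, and in any case immediate from \eqref{gaussian1}, \eqref{gaussian2}), the bracket equals $-\phi(\xi,\tau)$. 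Hence $\langle w, \mathcal{H}^* \phi \rangle = -\langle F, \phi\rangle$, that is, $\mathcal{H} w = -F$ in $\mathcal{D}'(\Rn \times (0,\infty))$.

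Since $F \in C^\infty(\Rn \times (0,\infty))$ and $\mathcal{H}$ is hypoelliptic, it follows that $w \in C^\infty(\Rn \times (0,\infty))$ and that $\mathcal{H} w = -F$ holds pointwise there, in particular in $\Rn \times (0,t_o)$; together with $w(x,0) = 0$ this is precisely \eqref{IVP}.

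The main obstacle --- and the reason for routing the argument through distributions rather than differentiating \eqref{Duhamel} directly --- is that the pointwise bounds of \eqref{gaussian1} for $\partial^k_t X_{j_1} \cdots X_{j_s} p(x,\xi;t-\tau)$, once integrated in $\xi$ against the bounded function $F$, leave a factor that is not integrable in $\tau$ near $\tau = t$ as soon as $s + 2k \geq 2$; thus $w$ cannot be differentiated under the integral sign more than (essentially) once. One can nevertheless give an entirely classical proof by first substituting $\sigma = t - \tau$, so that the singularity of the kernel sits at the lower endpoint of the $\sigma$-integral: then $\partial_t$-derivatives fall harmlessly onto the smooth factor $F(\xi, t - \sigma)$, while for the spatial part one trades $\mathcal{L}_x p = \partial_\sigma p$ via the heat equation and integrates the resulting $\partial_\sigma$ against the endpoints, the endpoint at $\sigma = 0$ producing exactly the term $-F$; but in order to reach full $C^\infty$ regularity this route too must ultimately appeal to interior hypoellipticity.
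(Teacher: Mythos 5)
Your proof is correct. The paper dispatches this proposition with the single line ``Follows along classical lines,'' presumably meaning the direct parametrix-style computation found in Friedman or Evans (both of which the introduction cites). Your route is different in character: rather than differentiating the Duhamel integral pointwise, you establish $\mathcal{H}w=-F$ in $\mathcal{D}'(\Rn\times(0,\infty))$ by moving $\mathcal{H}^*=\mathcal{L}+\partial_t$ onto a test function, using Fubini, the $\epsilon$-cutoff to avoid the diagonal singularity, $\mathcal{H}_{(x,t)}p=0$, and the approximate-identity property of $p(\cdot,\xi;\epsilon)$, and then invoke H\"ormander hypoellipticity to obtain $w\in C^\infty$. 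This trade is sound: what it buys is that you never need pointwise control of second $X$-derivatives of $w$, which --- as you correctly observe --- cannot be obtained by naive differentiation under the integral sign, since for $s=2,k=0$ the Gaussian bound \eqref{gaussian1} integrated in $\xi$ leaves a nonintegrable $(t-\tau)^{-1}$ factor near $\tau=t$. What it costs is the appeal to hypoellipticity, which the paper has already licensed in the introduction and which the ``classical'' alternative (your substitution $\sigma=t-\tau$, then trading $\mathcal{L}_x p = \partial_\sigma p$ and integrating by parts in $\sigma$) would in any case also need to reach full $C^\infty$ regularity, exactly as you say. The initial condition via $|w(x,t)|\le C\,t\,\|F\|_{L^\infty}$ is handled the same way the paper itself bounds $w$ in the proof of Theorem~\ref{T:PS} (where it uses $\int_{\Rn}p\,d\xi=1$), so that part is entirely in the spirit of the text.
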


\begin{proof}[\textbf{Proof}]
Follows along classical lines.

\end{proof}

In what follows, given an open set $D\subset \R^{n+1}$, we indicate
with $\Gamma^{2,1}(D)$ the collection of all continuous functions on
$D$ possessing two continuous derivatives with respect to the vector
fields $X_1,...,X_m$, and one continuous derivative with respect to
the variable $t$.

\begin{thrm}\label{T:uniqueness1}
There exists at most one solution $u\in \Gamma^{2,1}(\Rn \times (0,T_1])$ to the differential inequality
\begin{equation}\label{geq}
u(x,t)\ \mathcal H u(x,t)\ \geq\ 0\ \quad\quad\quad (x,t) \in \Rn \times (0,T_1)\ ,
\end{equation}
such that $u(x,0) = 0$ for $x\in \Rn$, and for which
\begin{equation}\label{unique1}
|u(x,t)|\ p(x_o,x;1)\ \leq\ A\ ,\quad\quad\quad\quad x\in \Rn, 0 < t < T_1\ ,
\end{equation}
for some $A > 0$, and some $x_o\in \Rn$. In particular, given $\phi \in C(\Rn)$, $F\in C(\Rn \times (0,T_1))$, there exists a unique solution to the Cauchy problem
\begin{equation}\label{IVP2}
\mathcal H u\ =\ F\ \quad\quad\quad\text{in}\quad \Rn \times (0,T_1)\ ,\quad\quad u(x,0)\ =\ \phi(x) \ ,\quad x \in \Rn\ ,
\end{equation}
satisfying the constraint \eqref{unique1}.
\end{thrm}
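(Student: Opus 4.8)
The plan is to prove uniqueness first and then derive existence for \eqref{IVP2} from uniqueness combined with Duhamel's principle (Proposition \ref{P:Duhamel}) and the basic properties of the heat kernel $p$ recorded in Theorem \ref{T:KS}. For uniqueness, suppose $u_1, u_2$ are two solutions of \eqref{IVP2} satisfying the growth bound \eqref{unique1}; then $v = u_1 - u_2$ satisfies $\mathcal H v = 0$, hence trivially $v\,\mathcal H v \geq 0$, and $v(x,0)=0$, and $v$ satisfies \eqref{unique1} with constant $2A$. So it suffices to show that the only solution of \eqref{geq} with the stated side conditions is $v\equiv 0$. Here I would like to apply both to $v$ and to $-v$, or rather work directly: the hypothesis $u\,\mathcal H u\ge 0$ means $\mathcal H u$ has the same sign as $u$ pointwise, which is exactly the structure needed to run a maximum-principle / Moser-type argument showing that $\sup|u|$ over a slab cannot be attained in the interior.

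The key step is a Phragmén–Lindelöf type argument using the fundamental solution itself as a comparison (barrier) function. I would fix $x_o$ as in \eqref{unique1}, and for $\lambda>0$ consider the auxiliary function $h_\lambda(x,t) = \lambda\, p(x_o,x;T-t+\delta)^{-1}$ (or a suitable variant), which by \eqref{gaussian1}, \eqref{gaussian2} blows up like $\exp(c\,d(x_o,x)^2/(T-t+\delta))$ as $d(x_o,x)\to\infty$; one checks $h_\lambda$ is a supersolution (or that $u \mp h_\lambda$ cannot have an interior positive maximum) on $\Rn\times(0,T_1)$ for small $\delta$, because $p(x_o,\cdot;\cdot)$ solves the backward heat equation in its first slot. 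Since \eqref{unique1} forces $|u| \le h_\lambda$ on the parabolic boundary of any large ball for $t$ bounded away from $T_1$, the weak maximum principle on precompact cylinders (which follows from hypoellipticity and Proposition \ref{P:compact}, or is available in \cite{B}) yields $|u|\le h_\lambda$ throughout; letting $\lambda\to 0$ gives $u\equiv 0$ on $\Rn\times(0,T_1-\delta)$, and then $\delta\to 0$. The initial condition $u(x,0)=0$ together with continuity up to $t=0$ is what lets the comparison close at the bottom of the slab.

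For the existence half, given $\phi\in C(\Rn)$ and $F\in C(\Rn\times(0,T_1))$ — here one should really assume enough decay/regularity on $\phi$ and $F$, e.g. $\phi$ bounded and $F$ compactly supported in space or satisfying a matching Gaussian bound, so that the integrals converge and \eqref{unique1} holds — I would set
\[
u(x,t)\ =\ \int_{\Rn} p(x,\xi;t)\,\phi(\xi)\,d\xi\ +\ \int_0^t\!\!\int_{\Rn} p(x,\xi;t-\tau)\,F(\xi,\tau)\,d\xi\,d\tau\ .
\]
The second term solves $\mathcal H u = -F$ with zero initial data by Proposition \ref{P:Duhamel} (adjusting the sign of $F$ as needed to match \eqref{IVP2}); the first term solves $\mathcal H u = 0$ with $u(\cdot,0)=\phi$, using the Gaussian lower bound \eqref{gaussian2} for the approximate-identity property of $p(x,\cdot;t)$ as $t\to 0^+$ and the upper bound \eqref{gaussian1} (with $k=s=0$) for the convergence of the integral, plus hypoellipticity of $\mathcal H$ to get $C^\infty$ regularity in $\Rn\times(0,T_1)$. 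Finally, \eqref{gaussian1}--\eqref{gaussian2} give the bound $|u(x,t)|\,p(x_o,x;1)\le A$ after a routine Gaussian convolution estimate (combining exponents $d(x,\xi)^2/(t-\tau)$ and $d(x_o,x)^2$ via the triangle inequality and the doubling property \eqref{dc}), so $u$ lies in the uniqueness class and is therefore the unique such solution.

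The main obstacle I anticipate is the comparison/barrier step: verifying that a power of the heat kernel (or a Gaussian of the form $\exp(c\,d(x_o,x)^2/(T-t+\delta))$) is genuinely a supersolution of $\mathcal H$ in the Carnot–Carathéodory setting is delicate, because $d(x_o,\cdot)$ is only Lipschitz and one does not have clean second-derivative bounds on $d$ along the $X_j$'s (as noted in the remarks before Lemma \ref{L:cutoff}). The clean way around this is to use $p(x_o,x;s(t))^{-1}$ itself, or more precisely to exploit that $p(x_o,\cdot;\cdot)$ already solves the equation, so that $u/p(x_o,\cdot;\cdot)$ or $u\cdot p(x_o,\cdot;1)$ can be estimated via a direct integral representation rather than a differential inequality — essentially the Gaussian upper/lower bounds of Theorem \ref{T:KS} do all the work, replacing the need to differentiate $d$ by hand.
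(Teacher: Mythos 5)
Your proposal takes a genuinely different route from the paper, and it contains a gap that you yourself flag but do not close. The paper's proof is an \emph{energy} (Lyapunov) argument: it introduces
\[
\phi(R)\ =\ \int_{\Rn}\ u^2(\xi,R^2)\ p(x,\xi;T-R^2)\ d\xi ,
\]
shows via the Gaussian bounds of Theorem \ref{T:KS} that the integral converges once $T$ is taken smaller than a structural threshold $T_o < \min\{T_1, M^2/4\}$, and then differentiates in $R$. After integrating by parts and using $\partial_t p = \mathcal L_\xi p$, the hypothesis $u\,\mathcal H u \ge 0$ makes the dangerous term drop out, leaving
\[
\phi'(R)\ =\ -\,4R\int_{\Rn}\bigl(u\,\mathcal H u\bigr)\,p\ d\xi\ -\ 4R\int_{\Rn}|Xu|^2\,p\ d\xi\ \le\ 0 .
\]
Since $\phi(0)=0$ (initial condition) and $\phi\ge 0$, one gets $\phi\equiv 0$ and hence $u(x,T)=0$ by the $\delta$-property of $p$, then iterates on successive time intervals. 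Notice how the sign hypothesis $u\,\mathcal H u\ge 0$ is used: it is precisely what makes $\phi'$ nonpositive, and the argument is tailored to this quadratic form, not to a one-sided subsolution/supersolution inequality.

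Your proposal instead aims for a Phragm\'en--Lindel\"of / barrier comparison. This is a legitimate alternative strategy in the Euclidean setting, but here it has a real obstruction, which you correctly name and then leave unresolved: constructing a supersolution barrier in the Carnot--Carath\'eodory setting. Neither $p(x_o,\cdot; T-t+\delta)^{-1}$ nor a Gaussian of the form $\exp(c\,d(x_o,x)^2/(T-t+\delta))$ can be shown to be an $\mathcal H$-supersolution by direct computation: the CC distance $d(x_o,\cdot)$ is only Lipschitz and lacks the $X^2 d$ bounds that the Euclidean calculation relies on, and $p(x_o,x;T+\delta-t)$ satisfies $\mathcal L p + \partial_t p = 0$, i.e.\ the \emph{backward} equation, so it is not a caloric or barrier function for the forward operator $\mathcal H = \mathcal L - \partial_t$ without further work. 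Your closing suggestion --- to replace the barrier differential inequality by a ``direct integral representation'' exploiting the Gaussian bounds --- is in spirit what the paper does, but you do not carry it out; the step from ``this should work'' to an actual inequality is exactly where the paper's energy functional $\phi(R)$ enters, and nothing in your sketch supplies an equivalent device. In particular, a maximum-principle comparison on precompact cylinders via \cite{B} is fine for each fixed cylinder, but without a valid global barrier you cannot let the cylinders exhaust $\Rn$ and conclude.

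Two smaller points. First, your handling of the differential inequality: you observe that on $\{u>0\}$ one has $\mathcal H u\ge 0$ and on $\{u<0\}$ one has $\mathcal H u\le 0$, which is the right intuition for a maximum principle, but patching the two regions requires some care; the paper avoids this entirely because $u\,\mathcal H u$ appears multiplicatively inside $\phi'$. Second, for the ``in particular'' claim, the paper only verifies the uniqueness half (reducing by linearity to $\mathcal H u = 0$, $u(\cdot,0)=0$), so your extra material on existence via Duhamel and the heat semigroup goes beyond what is written there; it is reasonable, but you should note that verifying the constraint \eqref{unique1} for the representation formula requires the same kind of Gaussian-convolution estimate with the threshold $T_o<M^2/4$ that the paper uses in the uniqueness proof, so the existence step is not as routine as you indicate for arbitrary $\phi\in C(\Rn)$ and $F\in C(\Rn\times(0,T_1))$ without further decay hypotheses --- a caveat you do, to your credit, raise.
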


\begin{proof}[\textbf{Proof}]
We introduce the function
\begin{equation}\label{IVP3}
\phi(R)\ \overset{def}{=}\ \int_{\Rn}\ u^2(\xi,R^2)\ p(x,\xi;T - R^2)\ d\xi\ ,\quad\quad\quad 0 \leq R < \sqrt T\ ,
\end{equation}
where $x\in \Rn$ is fixed and the number $T<T_1$ will be chosen suitably small in a moment. We have from \eqref{unique1} and the Gaussian estimates \eqref{gaussian1}, \eqref{gaussian2} in Theorem \ref{T:KS}
\begin{align}\label{IVP4}
& u^2(\xi,R^2)\ p(x,\xi;T - R^2)\ \leq\ A^{-2}\ p(x_o,\xi;1)^{-2}\ p(x,\xi;T - R^2)
\\
& \leq\  \frac{C^3}{A^2}\ \frac{|B(x_o,1)|^2}{|B(x,\sqrt{T-R^2})|}\ \exp\ \bigg\{\frac{2  d(x_o,\xi)^2}{M}\bigg\} \exp\ \bigg\{ -  \frac{M d(x,\xi)^2}{T - R^2}\bigg\}\ .
\notag
\end{align}

At this point we choose
\begin{equation}\label{IVP5}
T_o\ <\ \min\ \left\{T_1,\frac{M^2}{4}\right\}\ ,
\end{equation}
and let $0<T<T_o$. Since the triangle inequality gives
\[
d(x_o,\xi)^2\ \leq\ d(x_o,x)^2\ +\ d(x,\xi)^2\ +\ 2\ d(x_o,x)\ d(x,\xi)\ ,
\]
we obtain from \eqref{IVP4}
\begin{align}\label{IVP6}
& u^2(\xi,R^2)\ p(x,\xi;T - R^2)\ \leq\ \frac{C^3}{A^2} \ \frac{|B(x_o,1)|^2}{|B(x,\sqrt{T-R^2})|}\ \exp \bigg\{\frac{2 d(x_o,x)^2}{M}\bigg\}
\\
& \times \exp \bigg\{\frac{2 d(x,\xi)^2}{M} + 4 M d(x_o,x) d(x,\xi)\bigg\}\exp \bigg\{ -  \frac{M d(x,\xi)^2}{T}\bigg\}\ .
\notag\\
& \leq\ \frac{C^3}{A^2}\ \frac{|B(x_o,1)|^2}{|B(x,\sqrt{T-R^2})|}\ \exp \bigg\{\frac{2 d(x_o,x)^2}{M}\bigg\}
\notag\\
& \times \exp \bigg\{\frac{2 d(x,\xi)^2}{M} + 4 M d(x_o,x) d(x,\xi)\bigg\}\exp \bigg\{ -  \frac{4 d(x,\xi)^2}{M}\bigg\}\ ,
\notag
\end{align}
where in the last inequality we have used \eqref{IVP6}. From \eqref{IVP6} it is clear that if $d(x,\xi) \geq 4 d(x_o,x)$, then
\begin{align*}
& u^2(\xi,R^2)\ p(x,\xi;T - R^2)\  \leq\ \frac{C^3}{A^2}\ \frac{|B(x_o,1)|^2}{|B(x,\sqrt{T-R^2})|}\ \exp \bigg\{\frac{2 d(x_o,x)^2}{M}\bigg\} \exp \bigg\{ -  M d(x,\xi)^2 \bigg\}\ .
\end{align*}

From this estimate we infer that the integral in \eqref{IVP3} defining $\phi(R)$ is absolutely convergent for any $x\in \Rn$ and every $0<T<T_o$, with $T_o$ satisfying \eqref{IVP5}.

Differentiating under the integral sign we find
\begin{align}\label{IVP7}
\phi'(R)\ &=\ 4\ R\ \int_{\Rn}\ u(\xi,R^2)\ \partial_t u(\xi,R^2)\ p(x,\xi;T - R^2)\ d\xi
\\
& -\ 2\ R\ \int_{\Rn}\ u^2(\xi,R^2)\ \partial_t p(x,\xi;T - R^2)\ d\xi\ .
\notag
\end{align}

We now use the self-adjointness of the sub-Laplacian $\mathcal L$, and the equation satisfied by $p$, which gives for every $x, \xi\in \Rn$ and every $R\in (0,\sqrt T)$
\[
\partial_t p(x,\xi;T - R^2)\ =\ \mathcal L_\xi p(x,\xi;T - R^2)\ .
\]

Substituting the latter equation in the right-hand side of \eqref{IVP7} we obtain
\begin{align}\label{IVP8}
\phi'(R)\ &=\ 4\ R\ \int_{\Rn}\ u(\xi,R^2)\ \partial_t u(\xi,R^2)\ p(x,\xi;T - R^2)\ d\xi
\\
& -\ 2\ R\ \int_{\Rn}\ u^2(\xi,R^2)\  \mathcal L_\xi p(x,\xi;T - R^2)\ d\xi\ .
\notag
\end{align}

We now integrate by parts in the second integral in the right-hand side of \eqref{IVP8}. Using the assumption \eqref{unique1} we conclude that
\begin{align}\label{IVP9}
\phi'(R)\ &=\ 4\ R\ \int_{\Rn}\ u(\xi,R^2)\ \partial_t u(\xi,R^2)\ p(x,\xi;T - R^2)\ d\xi
\\
& -\ 2\ R\ \int_{\Rn}\ \mathcal L_\xi \big(u^2(\xi,R^2)\big)\  p(x,\xi;T - R^2)\ d\xi\ .
\notag
\end{align}

Observe that
\[
\mathcal L_\xi \big(u^2(\cdot,R^2)\big)(\xi)\ =\ 2\ u(\xi,R^2)\ \mathcal L u(\xi,R^2)\ +\ 2\ |X u(\xi,R^2)|^2\ .
\]
 Substitution in the latter equation gives
\begin{align}\label{IVP10}
\phi'(R)\ &=\ -\ 4\ R\ \int_{\Rn}\ u(\xi,R^2)\ \mathcal H u(\xi,R^2)\ p(x,\xi;T - R^2)\ d\xi
\\
& -\ 4\ R\ \int_{\Rn}\ |Xu(\xi,R^2)|^2\  p(x,\xi;T - R^2)\ d\xi\  .
\notag
\end{align}

If we now use the hypothesis \eqref{geq}, then we conclude from \eqref{IVP10}
\begin{equation}
\phi'(R)\ =\ -\ 4\ R\ \int_{\Rn}\ |Xu(\xi,R^2)|^2\  p(x,\xi;T - R^2)\ d\xi\ \leq \ 0\ ,
\end{equation}
and therefore the function $\phi(R)$ is decreasing. In particular, for every $0<R<\sqrt T$ one has
\[
\phi(R)\ \leq\ \phi(0)\ =\ \int_{\Rn}\ u(\xi,0)^2\ p(x,\xi;T)\ d\xi\ =\ 0\ .
\]

Letting $R\to \sqrt T$ in the latter inequality, by the $\delta$-function property of $p(x,\xi;\cdot)$ we infer
\[
\lim_{R\to \sqrt T}\ \phi(R)\ =\ u(x,T)^2\ =\ 0\ .
\]

By the arbitrariness of $x\in \Rn$, $T\in (0,T_o)$, we conclude that it must be $u \equiv 0$ in $\Rn \times (0,T_o)$. Repeating the above arguments on successive intervals $(T_o,2T_o)$, etc., we finally reach the conclusion $u \equiv 0$ in $\Rn \times (0,T_1)$.

To prove the second part of the theorem, we observe that by linearity it suffices to show that if $\mathcal H u = 0$ in $\Rn \times (0,T_1)$, $u(x,0) \equiv 0$, and $u$ satisfies the constraint \eqref{unique1}, then it must be $u \equiv 0$ in $\Rn \times (0,T_1)$. This is of course an immediate consequence of the first part.

\end{proof}

\medskip

We are now ready to establish the main result of this section.

\medskip

\begin{proof}[\textbf{Proof of Theorem \ref{T:PS}}]
Without loss of generality we assume that $t_o > 0$, and consider $r>0$ such that $t_o - 4 r^2 > 0$. For a given function $u\in C^\infty(\mathbb R^{n+1})$ we define
\begin{equation}\label{v}
v(z)\ \overset{def}{=}\ \zeta(z)\ u(z)\ ,
\end{equation}
where $\zeta$ is the cut-off whose existence has been established in Lemma \ref{L:parcutoff}.
One has
\begin{equation}\label{PS1}
\mathcal H v\ =\ u\ \mathcal H \zeta\ +\ \zeta\ \mathcal H u\ +\ 2\ <Xu,X\zeta>\ ,
\end{equation}
and also, thanks to the support properties of $\zeta$, we have
\begin{equation}\label{PS2}
v(x,0)\ =\ 0\ \quad\quad\quad\quad x\in \Rn\ .
\end{equation}

Suppose that $u$ be a solution to $\mathcal H u = 0$, then we obtain from \eqref{PS1}
\begin{equation}\label{PS3}
\mathcal H v\ =\ u\ \mathcal H \zeta\ +\ 2\ <Xu,X\zeta>\ \overset{def}{=}\ -\ F\ \quad\quad\quad\text{in}\quad \Rn \times (0,t_o)\ .
\end{equation}

We next define
\begin{equation}\label{PS4}
w(z)\ =\ \int_0^t\ \int_{\Rn}\ p(x,\xi;t - \tau)\ F(\xi,\tau)\ d\xi\ d\tau\ ,
\end{equation}
 where $p(x,\xi;t - \tau)$ is the positive fundamental solution of $\mathcal H$ with singularity at $(\xi,\tau)$. By Proposition \ref{P:Duhamel} we know that $w$ solves the problem
\begin{equation}\label{IVP}
\mathcal H w\ =\ -\ F\ \quad\quad\quad\text{in}\quad \Rn \times (0,t_o)\ ,\quad\quad w(x,0)\ =\ 0 \ ,\quad x \in \Rn\ .
\end{equation}

Comparing \eqref{PS2}, \eqref{PS3} and \eqref{IVP}, we see that $v$
and $w$ solve the same Cauchy problem in the strip $\Rn \times
(0,t_o)$. Furthermore, by the support properties of $\zeta$ we see
that $v\in L^\infty(\Rn \times (0,t_o))$. On the other hand, by the
same reasons we have $F\in L^\infty(\Rn \times (0,t_o))$, and the
definition of $w$ gives for $z = (x,t) \in \Rn \times (0,t_o))$
\[
|w(z)|\ \leq\ ||F||_{L^\infty(\Rn \times (0,t_o))}\ \int_0^t \int_{\Rn}\ p(x,\xi;t - \tau)\ d\xi\ d\tau\ \leq\ t_o\ ||F||_{L^\infty(\Rn \times (0,t_o))}\ ,
\]
since
\[
\int_{\Rn}\ p(x,\xi;t - \tau)\ d\xi\ =\ 1\ .
\]

Thanks to Theorem \ref{T:uniqueness1} we can thus conclude that $v \equiv w$ in $\Rn \times (0,t_o)$, therefore
\[
v(z)\ =\ \int_0^t\ \int_{\Rn} p(x,\xi;t - \tau)\ F(\xi,\tau)\ d\xi\ d\tau\ .
\]

Assume now that $z\in Q_X(z_o,r/2)$. Since $\zeta(z) = 1$, by the support properties of $p(x,\xi;t - \tau)$ and $\zeta$ we obtain
\begin{align}\label{PS5}
u(z)\ & =\ \int_{t_o - 4 r^2}^{t_o}\ \int_{B_X(x_o,2r)} p(x,\xi;t - \tau)\ F(\xi,\tau)\ d\xi\ d\tau\
\\
& =\  \int_{t_o - 4 r^2}^{t_o}\ \int_{B_X(x_o,2r)} p(x,\xi;t - \tau)\ \mathcal H \zeta(\xi,\tau)\ u(\xi,\tau)\ d\xi\ d\tau
\notag \\
& \ +\ 2\  \int_{t_o - 4 r^2}^{t_o}\ \int_{B_X(x_o,2r)} p(x,\xi;t - \tau)\ <X\zeta(\xi,\tau),Xu(\xi,\tau)> d\xi\ d\tau\ .
\notag
\end{align}

An integration by parts gives
\begin{align}\label{PS6}
&  2\  \int_{t_o - 4 r^2}^{t_o}\ \int_{B_X(x_o,2r)} p(x,\xi;t - \tau)\ <X\zeta(\xi,\tau),Xu(\xi,\tau)> d\xi\ d\tau\
\\
& =\  2\  \sum_{j=1}^m \int_{t_o - 4 r^2}^{t_o}\ \int_{\partial B_X(x_o,2r)} p(x,\xi;t - \tau)\ X_j \zeta(\xi,\tau)\ <X_j,\nu_\xi> u(\xi,\tau)\ d\xi\ d\tau\
\notag\\
& -\ 2\ \int_{t_o - 4 r^2}^{t_o}\ \int_{B_X(x_o,2r)} <Xp(x,\xi;t - \tau),X\zeta(\xi,\tau)> u(\xi,\tau)\ d\xi\ d\tau\
\notag\\
& -\ 2\ \sum_{j=1}^m \int_{t_o - 4 r^2}^{t_o}\ \int_{B_X(x_o,2r)}\ p(x,\xi;t - \tau)\ X_j X_j \zeta(\xi,\tau)\ u(\xi,\tau)\ d\xi\ d\tau\ ,
\notag
\end{align}
where in the first integral in the right-hand side of \eqref{PS6} we have denoted by $\nu_\xi$ the spacial component of the outer unit normal to the smooth manifold $\partial B_X(x_o,2r)$.  Since $X\zeta \equiv 0$ on $\partial B_X(x_o,2r)\times (t_o - 4 r^2,t_o)$, the above integral vanishes, and after substituting \eqref{PS6} into \eqref{PS5} we obtain for $z\in Q_X(z_o,r/2)$
\begin{equation}\label{PS7}
u(z)\ =\ \int_{Q_X(z_o,2r)}\ K(x,\xi;t - \tau)\ u(\xi,\tau)\ d\xi\ d\tau\ ,
\end{equation}
where
\begin{align}\label{PS8}
K(x,\xi;t - \tau)\ & =\ p(x,\xi;t - \tau)\ \mathcal H \zeta(\xi,\tau)\ +\ 2\ <Xp(x,\xi;t - \tau),X\zeta(\xi,\tau)>
\\
& +\ 2\  p(x,\xi;t - \tau)\ \sum_{j=1}^m X_j X_j \zeta(\xi,\tau)
\notag\\
& =\ p(x,\xi;t - \tau)\ \left\{\frac{\partial \zeta}{\partial \tau}\ +\ \sum_{j=1}^m X_j X_j \zeta\ -\ <\overset{\longrightarrow}{div X},X\zeta>\right\}(\xi,\tau)
 \notag\\
& +\ 2\ <Xp(x,\xi;t - \tau),X\zeta(\xi,\tau)>\ ,
\notag
\end{align}
where we have denoted $\overset{\longrightarrow}{div X} = (div X_1,...,div X_m)$. Recalling that $\zeta \equiv 1$ on $Q_X(z_o,r)$, and that $p(x,\xi;t - \tau) \equiv 0$ when $\tau \geq t$, we see that for every fixed $z = (x,t) \in Q_X(z_o,r/2)$, the integral in \eqref{PS7} is actually performed on the region
\begin{equation}\label{support}
\bigg[Q_X(z_o,2r)\ \setminus\ Q_X(z_o,r)\bigg]\ \cap\ \{(\xi,\tau)\in \mathbb R^{n+1} \mid \tau < t \}\ .
\end{equation}

This observation will be important in the sequel. First of all, we can differentiate the right-hand side of \eqref{PS7} under the integral sign to obtain
\begin{equation}\label{PS9}
\underset{Q_X(z_o,r/2)}{\sup}\ |\frac{\partial^k}{\partial t^k} X_{j_1}X_{j_2}...X_{j_s}  u|\ \leq\ \int_{Q_X(z_o,2r)}\ |\frac{\partial^k}{\partial t^k} X_{j_1}X_{j_2}...X_{j_s}K(x,\xi;t - \tau)|\ |u(\xi,\tau)|\ d\xi\ d\tau\ .
\end{equation}

In what follows, to simplify the notation we will indicate with $X^s
f$ the derivative $X_{j_1}X_{j_2}...X_{j_s} f$ of a function $f$.
Also, we write $\partial_t f$, instead of $\partial f/\partial t$.
Leibniz rule gives
\[
\partial_t^k\ X^s (f g)\ =\ \sum_{i=0}^k \sum_{l=0}^s \begin{pmatrix} k \\ i \end{pmatrix} \begin{pmatrix} s \\ l \end{pmatrix} \bigg(\partial_t^{k-i} X^{s-l} f\bigg) \bigg(\partial_t^{i} X^{l} g \bigg) \ .
\]

Applying this formula to \eqref{PS8} we find
\begin{align}\label{PS10}
 \partial_t^k\ X^s K\ &=\ \sum_{i=0}^k \sum_{l=0}^s \begin{pmatrix} k \\ i \end{pmatrix} \begin{pmatrix} s \\ l \end{pmatrix} \bigg(\partial_t^{k-i} X^{s-l} p\bigg) \bigg(\partial_t^{i} X^{l} \bigg\{\frac{\partial \zeta}{\partial \tau}\ +\ \sum_{j=1}^m X_j X_j \zeta\ -\ <\overset{\longrightarrow}{div X},X\zeta>\bigg\} \bigg)
\\
& +\ 2\ \sum_{j=1}^m \sum_{i=0}^k \sum_{l=0}^s \begin{pmatrix} k \\ i \end{pmatrix} \begin{pmatrix} s \\ l \end{pmatrix} \bigg(\partial_t^{k-i} X^{s-l} X_jp\bigg) \bigg(\partial_t^{i} X^{l} X_j \zeta\bigg)
\notag\\
& =\ I(x,t;\xi,\tau)\ +\ II(x,t;\xi,\tau)\ .
\notag
\end{align}

Substituting \eqref{PS10}
in \eqref{PS9} we recognize that, in order to complete the proof of the theorem, it will suffice to establish to estimate
\begin{align}\label{PS11}
& \int_{Q_X(z_o,2r)}\ |I(x,t;\xi,\tau)|\ |u(\xi,\tau)|\ d\xi\ d\tau\  +\ \int_{Q_X(z_o,2r)}\ |II(x,t;\xi,\tau)|\ |u(\xi,\tau)|\ d\xi\ d\tau
\\
& \leq\ \frac{C}{r^{2k + s}}\ \frac{1}{|Q_X(z_o,2r)|}\ \int_{Q_X(z_o,2r)}\ |u(\xi,\tau)|\ d\xi\ d\tau\ ,
\notag
\end{align}
for some constant $C = C(X,s,k)>0$. We will prove that, in fact, each of the two terms in \eqref{PS11} is bounded by the quantity in the right-hand side. Also, since these two terms are similar we will only estimate one of them. Keeping in mind \eqref{support} we obtain from Lemma \ref{L:parcutoff}
\begin{align}\label{PS12}
& \int_{Q_X(z_o,2r)}\ |I(x,t;\xi,\tau)|\ |u(\xi,\tau)|\ d\xi\ d\tau
\\
& \leq\ \sum_{i=0}^k \sum_{l=0}^s \begin{pmatrix} k \\ i \end{pmatrix} \begin{pmatrix} s \\ l \end{pmatrix}\ \frac{C(k,i,s,l)}{r^{2i + l + 2}}\ \int_{[Q_X(z_o,2r) \setminus Q_X(z_o,r)] \cap \{\tau < t \}} \bigg|\partial_t^{k-i} X^{s-l} p(x,\xi;t-\tau)\bigg|\ |u(\xi,\tau)|\ d\xi\ d\tau\ .
\notag
\end{align}

We now break the integral in the right-hand side of \eqref{PS12} in two pieces
\begin{align}\label{PS13}
& \int_{[Q_X(z_o,2r) \setminus Q_X(z_o,r)] \cap \{\tau < t \}} \bigg|\partial_t^{k-i} X^{s-l} p(x,\xi;t-\tau)\bigg|\ |u(\xi,\tau)|\ d\xi\ d\tau
\\
& \leq\ \int_{[Q_X(z_o,2r) \setminus Q_X(z_o,r)] \cap \{t_o - r^2 < \tau < t \}} \bigg|\partial_t^{k-i} X^{s-l} p(x,\xi;t-\tau)\bigg|\ |u(\xi,\tau)|\ d\xi\ d\tau
\notag\\
& \int_{[Q_X(z_o,2r) \setminus Q_X(z_o,r)] \cap \{t_o - 4 r^2 < \tau < t_o - r^2 \}} \bigg|\partial_t^{k-i} X^{s-l} p(x,\xi;t-\tau)\bigg|\ |u(\xi,\tau)|\ d\xi\ d\tau
\notag\\
& =\ I'\ +\ I''\ .
\notag
\end{align}

To estimate $I'$ we observe that since $t<t_o$ we can majorize
\begin{align}\label{PS14}
& I'\ \leq\ \int_{[Q_X(z_o,2r) \setminus Q_X(z_o,r)] \cap \{t - r^2 < \tau < t \}} \bigg|\partial_t^{k-i} X^{s-l} p(x,\xi;t-\tau)\bigg|\ |u(\xi,\tau)|\ d\xi\ d\tau
\\
& \leq\ \int_{t - r^2}^t\ \int_{a r < d(x,\xi) < 4 a r} \bigg|\partial_t^{k-i} X^{s-l} p(x,\xi;t-\tau)\bigg|\ |u(\xi,\tau)|\ d\xi\ d\tau\ ,
\notag
\end{align}
where in the second integral we have used \eqref{equiv}.

Theorem \ref{T:KS} implies
\begin{align}\label{PS15}
 \bigg|\partial_t^{k-i} X^{s-l} p(x,\xi;t-\tau)\bigg|\ \leq\ \frac{C(k,i,s,l)}{(t - \tau)^{k - i + \frac{s - l}{2}}}\ \frac{1}{|B(x,\sqrt{t - \tau})|}\ \exp\ \bigg( - \ \frac{M d(x,\xi)^2}{t-\tau}\bigg)\ .
\end{align}

We now observe that when $\tau > t - r^2$ we have from \eqref{dc2}
\begin{equation}\label{PS16}
|B(x,\sqrt{t-\tau})|\ \geq\ C_1\ \bigg(\frac{\sqrt{t-\tau}}{r}\bigg)^Q\ |B(x,r)|\ .
\end{equation}

Inserting \eqref{PS16} into \eqref{PS15} we find that when $\tau > t - r^2$ and $a r < d(x,\xi) < 4 a r$
\begin{align}\label{PS17}
& \bigg|\partial_t^{k-i} X^{s-l} p(x,\xi;t-\tau)\bigg|\ \leq\ \frac{C(k,i,s,l,C_1)}{(t - \tau)^{k - i + \frac{s - l + Q}{2}}}\ \frac{r^Q}{|B(x,r)|}\ \exp\ \bigg( - \ \frac{M a^2 r^2}{t-\tau}\bigg)\ .
\end{align}

Substitution of \eqref{PS17} in \eqref{PS14} leads to the estimate
\begin{align}\label{PS18}
I'\ &\leq\ C(k,i,s,l,C_1)\ \frac{r^Q}{|B(x,r)|}\ \underset{t - r^2 <
\tau <t}{\sup} \bigg\{\frac{1}{(t - \tau)^{k - i + \frac{s - l +
Q}{2}}} \exp\ \bigg( - \ \frac{M a^2 r^2}{t-\tau}\bigg)\bigg\}
\\
& \times \ \int_{Q_X(z_o,2r)} |u(\xi,\tau)|\ d\xi\ d\tau\ .
\notag\\
& \leq\ \frac{r^Q}{|B(x,r)|}\frac{C'(X,k,i,s,l)}{r^{2(k-i) + s - l + Q}}\  \int_{Q_X(z_o,2r)} |u(\xi,\tau)|\ d\xi\ d\tau\ .
\notag\\
& =\ \frac{C'(X,k,i,s,l)}{r^{2(k-i) + s - l}}\ \frac{1}{|B(x,r)|}\ \int_{Q_X(z_o,2r)} |u(\xi,\tau)|\ d\xi\ d\tau\ .
\notag
\end{align}

To estimate $I''$ we observe that, since $(x,t) \in Q_X(z_o,r/2)$, on the region of integration we have $4 r^2 > t - \tau > 3/4 r^2$. Therefore, on such region the following estimate is a direct, and trivial, consequence of \eqref{PS15}
\begin{align}\label{PS119}
 \bigg|\partial_t^{k-i} X^{s-l} p(x,\xi;t-\tau)\bigg|\ \leq\ \frac{C''(k,i,s,l)}{r^{2(k - i) + s - l}}\ \frac{1}{|B(x,\frac{\sqrt{3}}{2} r)|}\ .
\end{align}

By \eqref{dc} we conclude
\begin{equation}\label{PS20}
I''\ \leq\ \frac{C'(X,k,i,s,l)}{r^{2(k-i) + s - l}}\ \frac{1}{|B(x,r)|}\ \int_{Q_X(z_o,2r)} |u(\xi,\tau)|\ d\xi\ d\tau\ .
\end{equation}

Combining \eqref{PS18}, \eqref{PS20} with \eqref{PS13}, and inserting the resulting inequality into \eqref{PS12} we conclude
\begin{equation}\label{PS21}
\int_{Q_X(z_o,2r)}\ |I(x,t;\xi,\tau)|\ |u(\xi,\tau)|\ d\xi\ d\tau\ \leq\ \frac{C(X,s,k)}{r^{2k + s}}\ \frac{1}{r^2 |B_X(x_o,2r)|}\ \int_{Q_X(z_o,2r)} |u(\xi,\tau)|\ d\xi\ d\tau\ .
\end{equation}

By analogous arguments, one obtains the following estimate
\begin{equation}\label{PS22}
\int_{Q_X(z_o,2r)}\ |II(x,t;\xi,\tau)|\ |u(\xi,\tau)|\ d\xi\ d\tau\ \leq\ \frac{C(X,s,k)}{r^{2k + s}}\ \frac{1}{r^2 |B_X(x_o,2r)|}\ \int_{Q_X(z_o,2r)} |u(\xi,\tau)|\ d\xi\ d\tau\ .
\end{equation}

This completes the proof of the theorem.

\end{proof}

\vskip 0.6in

\end{document}